\def\rr{{\mathbb R}}
\def\rn{{{\rr}^n}}
\def\zz{{\mathbb Z}}
\def\nn{{\mathbb N}}
\def\cc{{\mathbb C}}
\def\cs{{\mathcal S}}
\def\cm{{\mathcal M}}
\def\cn{{\mathcal N}}
\def\fz{\infty}
\def\az{\alpha}
\def\dist{{\mathop\mathrm{\,dist\,}}}
\def\lz{\lambda}
\def\gz{{\gamma}}
\def\oz{{\omega}}
\def\vz{\varphi}
\def\sz{\sigma}
\def\wz{\widetilde}
\def\hs{\hspace{0.3cm}}
\def\ls{\lesssim}
\def\gfz{\genfrac{}{}{0pt}{}}
\def\rn{{{\mathbb R}^n}}
\def\rr{{\mathbb R}}
\def\cc{{\mathbb C}}
\def\zz{{\mathbb Z}}
\def\nn{{\mathbb N}}
\def\cm{{\mathcal M}}
\def\hs{\hspace{0.3cm}}
\def\fz{\infty}
\def\az{\alpha}
\def\supp{{\mathop\mathrm{\,supp\,}}}
\def\dist{{\mathop\mathrm{\,dist\,}}}
\def\diam{{\mathop\mathrm{\,diam\,}}}
\def\lz{\lambda}
\def\gz{{\gamma}}
\def\vz{\varphi}
\def\sz{\sigma}
\def\wz{\widetilde}
\def\ls{\lesssim}
\def\oz{\overline}
\def\diam{{\mathop\mathrm{\,diam\,}}}
\def\dfrac{\displaystyle\frac}
\def\dsup{\displaystyle\sup}
\def\r{\right}
\def\lf{\left}
\newcommand{\dbt}{\dot{B}_{p,q}^{s,\tau}(\rn)}
\newcommand{\bt}{{B}_{p,q}^{s,\tau}(\rn)}
\newtheorem{theorem}{Theorem}[section]
\newtheorem{lemma}[theorem]{Lemma}
\newtheorem{corollary}[theorem]{Corollary}
\newtheorem{proposition}[theorem]{Proposition}
\theoremstyle{definition}
\newtheorem{remark}[theorem]{Remark}
\newtheorem{definition}[theorem]{Definition}
\numberwithin{equation}{section}
\def\hs{\hspace{0.3cm}}
\begin{document}

\title{\bf\Large The Radial Lemma of Strauss in the Context of Morrey Spaces
\footnotetext{\hspace{-0.35cm} 2010 {\it
Mathematics Subject Classification}. Primary  46E35, Secondary 42C15.
\endgraf
{\it Key words and phrases}.  Smoothness of radial function, decay of
radial function, Sobolev-Morrey space,  Besov-type space, Besov-Morrey space.  \endgraf
This work is partially supported by the 2010 Joint Research Project Between
China Scholarship Council and German Academic Exchange
Service (PPP) (Grant No. LiuJinOu [2010]6066).
Wen Yuan is supported by the Alexander von
Humboldt Foundation and the National
Natural Science Foundation (Grant No. 11101038) of China,
and Dachun Yang is supported by the National
Natural Science Foundation (Grant No. 11171027) of China
and Program for Changjiang Scholars and Innovative
Research Team in University of China.
}}
\date{}
\author{Wen Yuan, Winfried Sickel and Dachun Yang\footnote{Corresponding author}}
\maketitle

\vspace{-0.6cm}

\begin{center}
\begin{minipage}{13cm}{\small
{\noindent{\bf Abstract}\quad
In this paper we consider smoothness and decay properties of radial functions
belonging to smoothness spaces related to Morrey spaces (Sobolev-Morrey spaces,
Besov-type spaces and Besov-Morrey spaces).
Within this framework we prove generalizations of the Radial Lemma of Strauss.
}}
\end{minipage}
\end{center}

\arraycolsep=1pt

\vspace{0.1cm}

%&&&&&&&&&&&&&&&&&&&&&&&&&&&&&&&&&&&&&&&&&&&&&&&&&&&&&&&&&&
%&&&&&&&&&&&&&&&&&&&&&&&&&&&&&&&&&&&&&&&&&&&&&&&&&&&&&&&&&&

\section{Introduction}

%&&&&&&&&&&&&&&&&&&&&&&&&&&&&&&&&&&&&&&&&&&&&&&&&&&&&&&&&&&
%&&&&&&&&&&&&&&&&&&&&&&&&&&&&&&&&&&&&&&&&&&&&&&&&&&&&&&&&&&

\hskip\parindent At the end of the seventies Strauss \cite{St}
was the first who  observed that
there is an interplay between the regularity and decay properties of radial functions.
We recall his lemma as follows.

\begin{proof}[\rm\bf Radial Lemma] \it Let $n\ge 2$.
Every radial function $f \in H^1 (\rr^n)(=W^1_2(\rn))$ is almost everywhere equal to a function
$\widetilde{f}$, continuous for $x\neq 0$, such that
\begin{equation}\label{strauss}
|\widetilde{f} (x)| \le C \, |x|^{\frac{1-n}{2}}\, \| \,f \, \|_{H^1(\rr^n)}\, ,
\end{equation}
where the positive constant $C$ depends only on $n$.
\end{proof}

Strauss stated (\ref{strauss}) with the extra condition $|x|\ge 1$, but this restriction is not needed.
The {\em Radial Lemma} contains three different assertions:
\vspace{-0.25cm}
\begin{itemize}
 \item[(a)]
 the existence of a representative of $f$, which is
continuous outside the origin;
\item[(b)]\vspace{-0.28cm}
the decay of $f$ near infinity;
\item[(c)]\vspace{-0.28cm}
the limited unboundedness near the origin.
\end{itemize}
\vspace{-0.25cm}
These three  properties do not extend to all functions in $H^1(\rr^n)$, of course.
In particular, $H^1 (\rr^n) \not \subset L_\infty (\rr^n)$, $n\ge 2$.
Taking an unbounded function $f\in H^1 (\rr^n)$, say, unbounded near the origin,
by means of a simple shift $f(x-x^0)$ we obtain a function which is now
unbounded around $x^0$ and belongs to $H^1 (\rr^n)$ as well.
This makes also clear that an inequality like (\ref{strauss}) can not  be true in the general context.

Later on, Lions \cite{Li} showed how the {\em Radial Lemma} extends to first order Sobolev spaces.

\begin{proposition}\label{decay1}
 Let $n\ge 2$ and $1 \le p < \infty$.
Every radial function $f \in W^1_p (\rr^n)$ is almost everywhere equal to a function
$\widetilde{f}$, continuous for $x\neq 0$, such that
\begin{equation}\label{lions}
|\widetilde{f} (x)| \le C\, |x|^{\frac{1-n}{p}}\, \| \,f \, \|_{W^1_p(\rr^n)}\, ,
\end{equation}
where the positive constant $C$ depends only on $n$ and $p$.
\end{proposition}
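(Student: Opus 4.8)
The plan is to reduce the $n$-dimensional estimate to a one-dimensional computation along rays, exactly in the spirit of Strauss's original argument. Since $f$ is radial, write $f(x)=g(|x|)=g(r)$ for $r=|x|$, where $g$ is a function on $(0,\infty)$. The first task is to make sense of $g$ and its derivative: by a standard density argument it suffices to prove \eqref{lions} for $f\in C_c^\infty(\rn)$ that are radial, and then pass to the limit, since the right-hand side controls the $W^1_p$-norm and continuity away from the origin is stable under uniform convergence on compact subsets of $\rn\setminus\{0\}$. For smooth compactly supported radial $f$, the profile $g$ is smooth on $[0,\infty)$ and vanishes for large $r$.

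The core step is the identity obtained by integrating from $r$ to $\infty$:
\begin{equation*}
|g(r)|^p=-\int_r^\infty \frac{d}{dt}\,|g(t)|^p\,dt
= -p\int_r^\infty |g(t)|^{p-2}g(t)\,g'(t)\,dt
\le p\int_r^\infty |g(t)|^{p-1}\,|g'(t)|\,dt.
\end{equation*}
Now multiply and divide by powers of $t$ to bring in the surface-measure weight $t^{n-1}$ that appears in polar coordinates: writing the integrand as $\bigl(|g(t)|\,t^{(n-1)/p}\bigr)^{p-1}\bigl(|g'(t)|\,t^{(n-1)/p}\bigr)\,t^{-(n-1)}$ and applying H\"older's inequality with exponents $p'=p/(p-1)$ and $p$ gives
\begin{equation*}
|g(r)|^p\le p\left(\int_r^\infty |g(t)|^p\,t^{n-1}\,\frac{dt}{t^{n-1}}\cdot\frac{dt}{?}\right)\cdots
\end{equation*}
— more cleanly: since $t^{-(n-1)}\le r^{-(n-1)}$ for $t\ge r$, one pulls out $r^{-(n-1)}$ and is left with $\int_r^\infty |g(t)|^{p-1}|g'(t)|\,t^{n-1}\,dt$, to which H\"older with exponents $p'$ and $p$ applies, yielding
\begin{equation*}
|g(r)|^p\le C\,r^{1-n}\left(\int_0^\infty |g(t)|^p\,t^{n-1}\,dt\right)^{1/p'}\left(\int_0^\infty |g'(t)|^p\,t^{n-1}\,dt\right)^{1/p}.
\end{equation*}
Recognizing $\int_0^\infty|g(t)|^p t^{n-1}\,dt\sim \|f\|_{L_p(\rn)}^p$ and $\int_0^\infty|g'(t)|^pt^{n-1}\,dt\sim\|\,|\nabla f|\,\|_{L_p(\rn)}^p$ (the radial derivative is the full gradient for a radial function), and using Young's inequality $a^{1/p'}b^{1/p}\le \tfrac1{p'}a+\tfrac1p b$, we obtain $|g(r)|^p\le C\,r^{1-n}\|f\|_{W^1_p(\rn)}^p$, i.e. $|\widetilde f(x)|\le C|x|^{(1-n)/p}\|f\|_{W^1_p(\rn)}$, which is \eqref{lions}.

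For continuity away from the origin, note that the same computation applied to $g(r_1)-g(r_2)$ (or simply absolute continuity of $t\mapsto g(t)$ on compact subintervals of $(0,\infty)$, which follows from $g,g'\in L_p(t^{n-1}dt)$ locally, hence in $L_1$ locally away from $0$) shows $\widetilde f$ is continuous on $\rn\setminus\{0\}$; the general case then follows by approximation as described. I expect the main technical obstacle to be the bookkeeping in the density/approximation step for $p=1$ and the careful justification that the limiting object is genuinely a continuous representative — in particular verifying that the one-dimensional absolute continuity survives the limit — rather than the inequality itself, which is an essentially elementary fundamental-theorem-of-calculus plus H\"older argument. One should also be slightly careful that the constant $C$ depends only on $n$ and $p$, which it manifestly does since the only inputs are the constant in polar coordinates and the exponent in H\"older/Young.
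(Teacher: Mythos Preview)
The paper does not actually prove this proposition: it is stated as a known result due to Lions \cite{Li} and then used as input for Theorem~\ref{t3.2b}. So there is no ``paper's own proof'' to compare against; your task was really to supply the classical argument, and you have essentially done so.

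Your argument is the standard Strauss--Lions computation and is correct in substance: integrate $\frac{d}{dt}|g(t)|^p$ from $r$ to $\infty$, insert the weight $t^{n-1}$ via the monotonicity $t^{-(n-1)}\le r^{-(n-1)}$ on $[r,\infty)$, apply H\"older with exponents $p',p$ to recover the $L_p$- and gradient-$L_p$-norms in polar coordinates, and finish with Young. The density reduction to radial $C_c^\infty$ functions is legitimate (radial mollification plus radial cutoffs), and your observation that the inequality itself, applied to $f_k-f_m$, yields uniform Cauchy convergence on compacta of $\rn\setminus\{0\}$ is exactly what makes the continuous representative survive the limit; this handles the point you flagged as a potential obstacle.

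Two small comments. First, the display with the placeholder ``$dt/?$'' and the trailing ``$\cdots$'' should be deleted --- you immediately give the clean version afterwards, so the aborted line only distracts. Second, for $p=1$ the derivative $\frac{d}{dt}|g|^p$ is awkward to write; it is cleaner to start directly from $|g(r)|\le\int_r^\infty|g'(t)|\,dt$ and skip H\"older altogether, which you implicitly acknowledge. Neither point affects correctness.
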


In recent years, after the seminal paper of Kozono and Yamazaki \cite{KY}, an increasing number of papers
on existence and regularity of solutions of certain partial differential equations is using
Morrey spaces and smoothness spaces built on Morrey spaces (as Sobolev-Morrey spaces, Besov-Morrey spaces or
Besov-type spaces).
For convenience of the readers, we recall their definitions (also to fix the notation which is different in different papers).

\begin{definition}\label{d1.2}
{\rm (i)} Let  $0<p\le u\le\infty$.
The \emph{Morrey space } $\mathcal{M}^u_{p}(\rr^n)$
is defined to be the set of all $p$-locally
Lebesgue-integrable functions $f$ on $\rr^n$ such that
\[
\|f\|_{\mathcal{M}^u_{p}(\rr^n)} :=  \sup_{B}
|B|^{1/u-1/p}\lf[\int_B |f(x)|^p\,dx\r]^{1/p}<\infty\, ,
\]
where the supremum is taken over all balls $B$ in $\rr^n$.

{\rm (ii)}
Let $m \in \nn$ and $1 \le p \le u \le \infty$. The \emph{Sobolev-Morrey space}
$W^m \cm^u_p (\rr^n)$ is the collection of all functions $f \in \cm^u_p (\rr^n)$ such that all distributional derivatives
$D^\alpha f$ of order $|\alpha|\le m$ belong to $\cm_p^u (\rr^n)$. We equip this space with the norm
\[
\|f\|_{W^m \cm^u_p (\rr^n)} := \sum_{|\alpha|\le m} \|D^\alpha f\|_{\cm_p^u (\rr^n)}\, .
\]
\end{definition}

\begin{remark}
We call the parameter $u$ in Definition \ref{d1.2}
the \emph{Morrey parameter} (also in connection with the
Besov-type and Besov-Morrey spaces defined below).
\end{remark}

Obviously we have $\mathcal{M}^p_{p} (\rr^n) = L^p (\rr^n)$
and $\mathcal{M}^\infty_{p}(\rr^n)= L^\infty(\rr^n).$
As a consequence of H\"older's inequality we conclude monotonicity with respect to $p$,
i.\,e.,
\begin{equation*}
\cm^u_w (\rr^n)\hookrightarrow \cm^u_p (\rr^n) \qquad
\mbox{if}\qquad 0 < p \le  w \le u \le \infty\, .
\end{equation*}
In particular $L^p(\rr^n) = \cm^p_p (\rr^n)\hookrightarrow \cm^p_v (\rr^n)$ if
$ v< p$. This implies that
\begin{equation*}
W^m_p (\rn) \hookrightarrow W^m \cm^p_v (\rr^n)\quad\mathrm{if}\quad 1\le  v \le p \le \infty \, .
\end{equation*}
For a better understanding and later use we mention also
the embedding into  the \emph{class
of all complex-valued, uniformly continuous and bounded functions
on $\rn$}, here denoted by $C_{ub} (\rn)$.
We have
\begin{equation}\label{morrey2}
W^{m} \cm^{u}_{p}(\rn) \hookrightarrow C_{ub} (\rn)
\qquad \mbox{if} \qquad \frac nu <   m \, ;
\end{equation}
see \cite[Proposition 17]{s011}.
Our first  result  consists in an extension of
Proposition \ref{decay1} to Sobolev spaces built on Morrey spaces.

\begin{theorem}\label{t3.2b}
Let $n\ge 2$ and $1\le p \le  u \le  \fz$.
Every radial function $f \in W^1 \cm_p^u (\rr^n)$ is almost everywhere equal to a function
$\widetilde{f}$, continuous for $x\neq 0$, such that
\begin{equation}
\label{eq-erg1b}
|\widetilde{f}(x)|\le C \, |x|^{\frac{1}p - \frac{n}{u}} \,  \|f\|_{W^{1} \cm^{u}_{p}(\rn)}\,
\end{equation}
holds  for all $|x|\ge 1$ with some positive
constant $C$ depending only on $n$, $p$ and $u$.
\end{theorem}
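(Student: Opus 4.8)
The plan is to reduce everything to a one–dimensional estimate for the radial profile. Write $f(x) = g(|x|)$, so that $g$ is defined on $(0,\infty)$, and recall that for a radial function on $\rn$ one has $D^\alpha f$ controlled by $g'$; in particular $g' \in L^1_{\loc}(0,\infty)$ and, for $0<r_1<r_2$, the fundamental theorem of calculus gives $g(r_1) = g(r_2) - \int_{r_1}^{r_2} g'(t)\,dt$. Fix $x$ with $|x| = r \ge 1$ and set $g_0(r) := \widetilde f(x)$. The goal is to bound $|g_0(r)|$ by first showing that, on the annulus $A_r := \{ r/2 \le |y| \le r \}$ (or some fixed multiple), the average of $|f|$ is controlled by $r^{1/p - n/u}\|f\|_{\cm^u_p}$, and then using the derivative to upgrade this average control to a pointwise bound at radius $r$.

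\medskip

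First I would establish the pointwise representative and continuity off the origin. By the embedding \eqref{morrey2} applied locally (away from the origin $W^1\cm^u_p$ sits inside $C_{ub}$ of any annulus, since the relevant smoothness condition is automatic there), $f$ has a representative $\widetilde f$ continuous on $\rn\setminus\{0\}$; alternatively, on each annulus $\{ \rho \le |x| \le 2\rho\}$ one has $\cm^u_p \hookrightarrow L^p$ of that annulus, so $W^1\cm^u_p$ restricts into a classical $W^1_p$ on a bounded domain and one invokes Proposition \ref{decay1}-type Sobolev embedding locally. Either way this is the soft part. Second, and this is the heart of the matter, I would control the average of $|f|$ over the annulus $A_r = \{\, r \le |y| \le 2r\,\}$. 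Since $A_r$ is covered by $O(1)$ balls of radius comparable to $r$ (the implied constant depending only on $n$), the defining inequality for $\cm^u_p$ gives
\[
\lf[\int_{A_r} |f(y)|^p\,dy\r]^{1/p} \le C\, r^{n/p - n/u}\,\|f\|_{\cm^u_p(\rn)}\, .
\]
Dividing by $|A_r|^{1/p} \sim r^{n/p}$ yields that the $L^p$-average of $|f|$ over $A_r$ is at most $C\, r^{-n/u}\,\|f\|_{\cm^u_p}$. Hence there exists some radius $s \in [r,2r]$ with $|g(s)| = $ (a value comparable to this average), more precisely such that $|g(s)|^p$ is at most a constant times the average of $|g(t)|^p t^{n-1}\,dt$ over $[r,2r]$ divided by $r^{n-1}$; carrying the $t^{n-1}$ Jacobian through, one gets $|g(s)| \le C\, r^{-n/u}\,\|f\|_{\cm^u_p}$ for some $s\in[r,2r]$.

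\medskip

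Third, I would pass from radius $s$ back to the given radius $r$ using the gradient. Writing $|g(r)| \le |g(s)| + \int_{r}^{2r} |g'(t)|\,dt$ and applying H\"older's inequality on $[r,2r]$ with exponents $p$ and $p'$,
\[
\int_{r}^{2r} |g'(t)|\,dt \le \lf[\int_r^{2r} |g'(t)|^p\, t^{n-1}\,dt\r]^{1/p} \lf[\int_r^{2r} t^{(1-n)p'/p}\,dt\r]^{1/p'}\, .
\]
The first factor is, up to the surface-measure constant, $\|\nabla f\|_{L^p(A_r)}$, which by the Morrey bound is at most $C\, r^{n/p - n/u}\,\|f\|_{W^1\cm^u_p}$. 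The second factor, since $r\ge 1$, is of order $r^{1/p' + (1-n)/p} = r^{1 - n/p}$ when $(1-n)p'/p \ne -1$ and carries at worst a logarithmic correction in the borderline case — but in all cases, for $r\ge 1$, it is bounded by $C\, r^{1 - n/p}$ (the borderline exponent $(1-n)p'/p = -1$ forces $p = n$, and one checks the log factor is still dominated after multiplying by $r^{n/p - n/u}$ because $1/p - n/u < 0$ there unless $u=\infty$, where the elementary $\cm^\infty_p = L^\infty$ case is handled directly). Multiplying the two factors gives $\int_r^{2r}|g'| \le C\, r^{1/p - n/u}\,\|f\|_{W^1\cm^u_p}$. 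Finally, since $r\ge 1$ we have $r^{-n/u} \le r^{1/p - n/u}$, so combining with the bound on $|g(s)|$ yields \eqref{eq-erg1b}. The main obstacle I anticipate is the bookkeeping in the borderline exponent $p=n$ and the endpoint $u=\infty$; both should be disposed of by separate elementary arguments, but they are the places where a naive computation breaks down.
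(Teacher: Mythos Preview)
Your argument is correct and takes a genuinely different route from the paper's. The paper localizes $f$ to a dyadic annulus via a smooth radial partition of unity $\sum_j \varrho(2^{-j}\cdot)$, applies Lions' lemma (Proposition~\ref{decay1}) for $W^1_p(\rn)$ to each localized piece, and then inserts the volume factor $|B|^{1/p-1/u}$ to pass from the local $L^p$ norm to the Morrey norm. You instead work directly with the radial profile $g$: a mean-value argument on the annulus $\{r\le |y|\le 2r\}$ produces a good radius $s$ with $|g(s)|\ls r^{-n/u}\|f\|_{\cm^u_p}$, and then the fundamental theorem of calculus combined with H\"older and the Morrey bound on $\nabla f$ transports this to $r$. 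In effect you are reproving Lions' lemma inline rather than quoting it; the paper's approach is slightly cleaner bookkeeping, while yours is more self-contained.

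One remark: your concern about a logarithmic correction at $p=n$ is unfounded. The integral $\int_r^{2r} t^{(1-n)p'/p}\,dt$ is over a dyadic interval, so when the exponent equals $-1$ it evaluates to $\log 2$, a constant, which matches $r^{1-n/p}=r^0=1$ exactly. There is no $\log r$ term to absorb, and the endpoint $u=\infty$ is likewise harmless since $\cm^\infty_p=L^\infty$ makes the statement trivial for $|x|\ge 1$. So the ``main obstacle'' you flag is not actually there.
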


\begin{proof}
The proof is simple and (\ref{eq-erg1b}) can be easily reduced to (\ref{lions}).
To see this, let $f \in W^1 \cm_p^u (\rr^n)$ be a radial function.
We choose a smooth radial cut-off function
$$\varrho \in RC_0^\infty (\rr^n):=\{f\in C_0^\fz(\rn):\ f\ \mbox{is radial}\}$$
such that $0\le\varrho\le1$, $\supp \varrho \subset \{x\in \rr^n: \:
1/2 <|x| < 2\}$,  and
$\sum_{j\in\zz} \varrho(2^{-j}x)=1$ for all $x\in\rn\setminus\{0\}$.
Then, for all $2^k\le |x|<2^{k+1} $ with $k \in \zz$,
we see that
$$f(x)=\sum_{j\in\zz}\varrho(2^{-j}x)f(x)=\sum_{j=k-1}^{k+1}\rho(2^{-j}x)f(x).$$
Notice that for all $k\in\nn_0:=\{0,1,\ldots\}$,
the function $\sum_{j=k-1}^{k+1}
\varrho (2^{-j}\, \cdot \, ) \, f (\, \cdot \,)$ is a radial function
in $W^1_p (\rr^n)$ and hence by Proposition \ref{decay1},
there exists a function $g_k$, which is continuous on $\rn\setminus\{0\}$ and
coincides with
$\sum_{j=k-1}^{k+1}
\varrho (2^{-j}\, \cdot \, ) \, f (\, \cdot \,)$
almost everywhere, such that
for all $x\in\rn$,
\begin{eqnarray*}
|g_k(x)|   & \le &  C \, |x|^{\frac{1-n}{p}}\,\sum_{j=k-1}^{k+1} \|
\,\varrho (2^{-j}\, \cdot \, ) \, f (\, \cdot \,) \,\|_{W^1_p(\rr^n)}\\
& \le  &  C \, |x|^{\frac{1-n}{p}}\, \sum_{|\alpha|\le 1}\,
\lf[\int_{2^{k-2}<|y|< \, 2^{k+2}}
\,|\, D^ \alpha (\varrho (2^{-j}\, \cdot) \, f)(y) \, |^p \, dy\r]^{1/p}\\
& \le  &  C \, \lf[\max_{|\alpha|\le 1} \| D^\alpha \varrho\,\|_{L^\infty (\rr^n)}\r]\,
|x|^{\frac{1-n}{p}}\, \sum_{|\alpha|\le 1}\, \lf[\int_{2^{k-2}< |y|<\, 2^{k+2}}
\,|\, D^ \alpha \, f (y) \, |^p \, dy\r]^{1/p} \, ,
\end{eqnarray*}
where $C$ is a positive constant depending only on $p$ and $n$.
Now we switch to the Morrey norm by inserting the volume of the ball with radius
$2^{k+2}$ to the power $1/u - 1/p$. By $\omega_n$ we denote the
\emph{volume of the unit ball}.
Then it follows that, for all $2^k \le|x| <2^{k+1} $ with $k\in \nn_0$,
\begin{eqnarray*}
|g_k(x)| & \le &  C \, \lf[\max_{|\alpha|\le 1} \| D^\alpha \varrho\, \|_{L^\infty (\rr^n)}\r]\,
|x|^{\frac{1-n}{p}}\, \lf(4^n\, 2^{kn}\, \omega_n\r)^{\frac 1p - \frac 1u} \,
\|\, f  \, \|_{W^ 1 \cm_p^u (\rr^n)}\\
& \le &
C \, |x|^{\frac{1}{p} - \frac{n}{u}}\,
\|\, f  \, \|_{W^ 1 \cm_p^u (\rr^n)} \, ,
\end{eqnarray*}
where $C$ is a positive constant depending on $p,u$ and $n$ only.

Now let $\wz{f}:=\sum_{k\in \zz} g_k/3$. Since
$\supp g_k\cap \supp g_j=\emptyset$
if $|j-k|\ge 5$, the summation over $k$ is finite, and hence
$\wz{f}$ is continuous on $\rn\setminus\{0\}$ and satisfies that
for all $|x|\ge1$,
$$|\wz{f}(x)|\le C \, |x|^{\frac{1}{p} - \frac{n}{u}}\,
\|\, f  \, \|_{W^ 1 \cm_p^u (\rr^n)} \,,$$
where $C$ is a positive constant depending on $p,u$ and $n$ only.
Moreover, since $g_k$ coincides with
$\sum_{j=k-1}^{k+1}
\varrho (2^{-j}\, \cdot \, ) \, f (\, \cdot \,)$ almost everywhere, we see that
$$\wz{f}(x)=\sum_{k\in\zz}
\sum_{j=k-1}^{k+1}\varrho(2^{-j}x)f(x)/3=\sum_{j\in\zz}\varrho(2^{-j}x)
\sum_{k=j-1}^{j+1}f(x)/3=f(x)$$
almost every $x\in\rn$.
This finishes the proof of Theorem \ref{t3.2b}.
\end{proof}

\begin{remark}
(i) Obviously the inequality (\ref{eq-erg1b})  covers (\ref{lions}) (for $u=p$).
Furthermore, we have decay near infinity if $1/p < {n}/{u}$.
In case $1/p > n/u$ the inequality (\ref{eq-erg1b}) can be immediately improved by (\ref{morrey2})
resulting in global boundedness of all elements of $W^{1} \cm^{u}_{p}(\rn)$.

(ii) The elementary proof given above shows that we do not need all available information
on the elements of $W^{1} \cm^{u}_{p}(\rn)$.
In fact, we only need that $f $ belongs to $W^{1}_{p}(\rn)$ locally and that
\[
\sup_{B}
|B|^{1/u-1/p}\lf\{\int_B | D^\alpha f(x)|^p\,dx\r\}^{1/p}<\infty\, ,\qquad |\alpha|\le 1\, ,
\]
where the supremum is taken over all balls $B$, centered in the origin and having radius larger than $1$.
Such kind of spaces are usually
called central Morrey spaces or local Morrey  spaces. They attracted
some attention in recent years; see, for example,
\cite{alg,bn10}. By this point of view it
would make sense to study associated smoothness spaces.
However, for the moment we concentrate on smoothness spaces related to
the original Morrey norm.
\end{remark}

The main application of the Strauss lemma and its generalizations consists in the proof of
the compactness of the embedding  of the  radial subspace $RW^m_p (\rr^n)$
of the Sobolev space $W^m_p (\rr^n)$ into Lebesgue spaces $L^q (\rr^n)$
(see, for example, \cite{ss00,ssv,HS1,HS2,ss13}).
Those applications are possible also for this more general situation.
For this we refer to our forthcoming paper \cite{ysy2}.

In many papers, we refer, e.\,g., to the survey \cite{s011a} or the papers \cite{ss00},
\cite{s02}, \cite{CO}, \cite{ssv} and \cite{ss12},
it has been shown that one gets a better insight into the behavior of radial functions if one  replaces the  Sobolev space
$W^1_p (\rr^n)$  by spaces of fractional order of smoothness, for instance, Bessel potential or Besov  spaces.
In such a framework $H^1(\rn)$  can be replaced either by
$H^s(\rn)$ with $s>1/2$ or by $B^{1/2}_{2,1}(\rn)$ to guarantee the same conclusions as in the Strauss Lemma above.
But the Bessel potential space $H^s(\rn)$ with $1/2< s< 1$ and the Besov space
$B^{1/2}_{2,1}(\rn)$ are much larger than $H^1(\rn)$. Indeed, we have
\[
H^1(\rn)\hookrightarrow H^s(\rn)\hookrightarrow   B^{1/2}_{2,1}(\rn)\, , \qquad 1/2< s< 1\, ,
\]
and all the embeddings are strict.
The $p$-version looks like follows
\[
W^1_p(\rn)\hookrightarrow    B^{1/p}_{p,1}(\rn)\, , \qquad 1< p\le \infty \, ,
\]
and again all the embeddings are strict.
This motivates the use of the more complicated spaces of fractional order of smoothness,
which is also done here. Also in this much more general framework we know
\[
W^1 \cm^u_p(\rn) \hookrightarrow    B^{1/p, \frac 1p - \frac 1u}_{p,1}(\rn)\, , \qquad
1< p \le u \le \infty \, .
\]
The main aim of this paper consists in proving the extension of Theorem \ref{t3.2b}
to  the Besov-type spaces  $B^{1/p, \frac 1p - \frac 1u}_{p,1}(\rn)$
instead of the Sobolev-Morrey spaces.

The paper is organized as follows.
In Section \ref{fs} we recall the definition and some properties of
Besov-type spaces. The main part of this section is taken by
a flexibilization of the known characterizations of the spaces $\bt$ by atoms.
Section \ref{smooth} is devoted to  the investigation of the
regularity of radial functions belonging to $\bt$ outside the origin.
Finally, in Section \ref{decay}, we study the decay of radial functions.
As a service for the reader we reformulate the outcome for Sobolev-Morrey spaces in a separate subsection
at the end of Section \ref{decay}.

\subsection*{Notation}

\hskip\parindent As usual, $\nn$ denotes the \emph{natural numbers}, $\nn_0$
the \emph{natural numbers including $0$},
$\zz$ the \emph{integers} and
$\rr$ the \emph{real numbers}.
$\cc$ denotes the \emph{complex numbers} and $\rn$ the
\emph{$n$-dimensional  Euclidean space}.
All functions are assumed to be complex-valued, i.\,e.,
we consider functions
$f:~ \rn \to \cc$.
Let $\mathcal{S}(\rn)$ be the collection of all \emph{Schwartz functions} on $\rn$ endowed
with the usual topology and denote by $\mathcal{S}'(\rn)$ its \emph{topological dual}, namely,
the space of all bounded linear functionals on $\mathcal{S}(\rn)$
endowed with the weak $\ast$-topology.
The symbol $\widehat{\varphi}$ refers to  the \emph{Fourier transform} of $\vz\in\cs'(\rn)$.

All function spaces, which  we consider in this paper, are subspaces of $\cs'(\rn)$,
i.\,e. spaces of equivalence classes with respect to
almost everywhere equality.
However, if such an equivalence class  contains a continuous representative,
then usually we work with this representative and call also the equivalence class a continuous function.

If $E$ and
$F$ are two quasi-Banach spaces, then the \emph{symbol} $E \hookrightarrow F$
indicates that the embedding is continuous.
By $C_0^\infty(\rn)$ we denote the \emph{set
of all compactly supported and infinitely differentiable functions}.

Let $\mathcal{Q}$ be the collection of all \emph{dyadic cubes} in $\rn$, namely,
\begin{equation}\label{dyadic}
\mathcal{Q}:= \{Q_{j,k}:= 2^{-j}([0,1)^n+k):\ j\in\zz,\ k\in\zz^n\}.
\end{equation}
The \emph{symbol}  $\ell(Q)$ is used for
the side-length of $Q$ and $j_Q:=-\log_2\ell(Q)$.
For  $j\in\zz$,  $x\in\rn$,  $\vz \in \cs(\rn)$ and $Q\in\mathcal{Q}$,
let $\vz_j(x):= 2^{jn}\vz(2^jx)\, .$

As usual, the \emph{symbol}  $C $ denotes   a positive constant
which depends
only on the fixed parameters $n,s,\tau, p,q$ and probably on auxiliary functions,
unless otherwise stated; its value  may vary from line to line.
Sometimes we   use the symbol ``$ \ls $''
instead of ``$ \le $''. The \emph{meaning of $A \ls B$} is
given by: there exists a positive constant $C$ such that
 $A \le C \,B$.
The \emph{symbol $A \asymp B$} is used as an abbreviation of
$A \ls B \ls A$.

%&&&&&&&&&&&&&&&&&&&&&&&&&&&&&&&&&&&&&&&&&&&&&&&&&&&&&&&&&&
%&&&&&&&&&&&&&&&&&&&&&&&&&&&&&&&&&&&&&&&&&&&&&&&&&&&&&&&&&&

\section{Besov-type spaces}\label{fs}

%&&&&&&&&&&&&&&&&&&&&&&&&&&&&&&&&&&&&&&&&&&&&&&&&&&&&&&&&&&
%&&&&&&&&&&&&&&&&&&&&&&&&&&&&&&&&&&&&&&&&&&&&&&&&&&&&&&&&&&

\hskip\parindent Sobolev and Besov spaces
have been widely used in various areas of analysis such as harmonic analysis
and partial differential equations.
There is a rich literature
with respect to these spaces, we refer, e.\,g., to the monographs
\cite{BL,BIN78,BIN96,Pe76,t83,t92,t06}.

Here we would like to concentrate on Besov-type spaces which are related
to a relatively new family of function spaces, so-called $Q_\az$ spaces;
see, e.\,g., \cite{dx,ejpx,x,x06}.
These  spaces, in general, do not coincide with Besov-Morrey spaces
(Besov spaces built on Morrey spaces), but they are not so far from each other.
In the following subsection we recall definitions
and state some basic properties of these scales including their relations to each other.
The second subsection in this section is devoted to the study of
a particular atomic decomposition of the   Besov-type spaces, which is adapted to the radial situation
and which serves as the main tool by dealing with the smoothness and decay properties of radial functions
belonging to  a Besov-type space.

%&&&&&&&&&&&&&&&&&&&&&&&&&&&&&&&&&&&&&&&&&&&&&&&&&&&&&&&&&&
%&&&&&&&&&&&&&&&&&&&&&&&&&&&&&&&&&&&&&&&&&&&&&&&&&&&&&&&&&&

\subsection{The definition and basic properties}\label{bp}

%&&&&&&&&&&&&&&&&&&&&&&&&&&&&&&&&&&&&&&&&&&&&&&&&&&&&&&&&&&
%&&&&&&&&&&&&&&&&&&&&&&&&&&&&&&&&&&&&&&&&&&&&&&&&&&&&&&&&&&

\hskip\parindent
We introduce the scale $\bt$ by using  smooth dyadic decompositions of unity.
Let $\Phi,$ $\vz\in\mathcal{S}(\rn)$ such that
\begin{equation}\label{e1.0}
\supp \widehat{\Phi}\subset \{\xi\in\rn:\,|\xi|\le2\}\quad\mathrm{and}\quad
|\widehat{\Phi}(\xi)|\ge C>0\hs\mathrm{if}\hs |\xi|\le 5/3
\end{equation}
and
\begin{equation}\label{e1.1}
\supp \widehat{\vz}\subset \{\xi\in\rn:\,1/2\le|\xi|\le2\}\quad\mathrm{and}\quad
|\widehat{\vz}(\xi)|\ge C>0\hs\mathrm{if}\hs 3/5\le|\xi|\le 5/3.
\end{equation}

Now we recall the notion of Besov-type spaces $\bt$; see \cite[Definition 2.1]{ysy}.

\begin{definition}\rm\label{d1}
Let $s\in\rr$, $\tau\in[0,\infty)$, $p, q \in(0,\fz]$ and $\Phi$,
$\vz\in\cs(\rn)$ be as in
\eqref{e1.0} and \eqref{e1.1}, respectively.
The \emph{Besov-type space} $\bt$ is defined to be the collection of all $f\in \mathcal{S}'(\rn)$ such that
$$\|f\|_{\bt}:=
\sup_{P\in\mathcal{Q}}\frac1{|P|^{\tau}}\left\{\sum_{j=\max\{j_P,0\}}^\fz
2^{js q}\left[\int_P
|\vz_j\ast f(x)|^p\,dx\right]^{q/p}\right\}^{1/q}<\fz$$
with the usual modifications made in case $p=\fz$ and/or $q=\fz$,
and with $\vz_0$  replaced by $\Phi$ if $j=0$.
\end{definition}

\begin{remark}\label{grund}
(i) The \emph{Besov-type space} $\bt$ is a complete quasi-normed space,
i.\,e., a quasi-Banach space (see \cite[Lemma~2.1]{ysy}).

(ii) In case $\tau =0$ we are back to the standard Besov spaces, i.\,e.,
$B^{s,0}_{p,q}(\rn) = B^{s}_{p,q}(\rn)$.

(iii) We have monotonicity with respect to $s$ and with respect to $q$, i.\,e.,
\[
B^{s_0,\tau}_{p,q_0}(\rn)\hookrightarrow B^{s_1,\tau}_{p,q_1}(\rn)\qquad \mbox{if}\quad s_0 >s_1
\quad \mbox{and} \quad 0 < q_0,q_1 \le \infty\, ,
\]
as well as $B^{s,\tau}_{p,q_0} (\rn)\hookrightarrow B^{s,\tau}_{p,q_1}(\rn)$
if $q_0 \le q_1\, .$

(iv) Let  $s\in\rr$ and $p\in (0,\fz]$. Then it holds that
$B^{s,\tau}_{p,q}(\rn) = B^{s+n(\tau-1/p)}_{\fz,\fz}(\rn)$
if either $q\in(0,\fz)$ and $\tau\in(1/p,\fz)$, or
$q=\fz$ and $\tau\in[1/p,\fz)$; see \cite{yy02}.
In case $s+n(\tau-1/p)>0$ the space $B^{s+n(\tau-1/p)}_{\fz,\fz}(\rn)$ is a H\"older-Zygmund space
with a transparent description in terms of differences; see, e.\,g., \cite[Section 2.5.7]{t83}.

(v) Of course, the spaces defined above, are complicated.
The definition is not really transparent. For this reason the authors
have studied in \cite[Section 4.3]{ysy}
also  characterizations  in terms of differences.

(vi)
The Besov-type space $\bt$ and its homogeneous counterpart
$\dbt$, restricted to the Banach space case, were first introduced by El
Baraka in \cite{el021, el022, el062}.
The extension to quasi-Banach spaces has been done in \cite{yy1,yy2}; see also
\cite{yy102,lsuyy1}.
For a  first systematic study we refer to the lecture note \cite{ysy}.
\end{remark}

Of particular importance for us is the following
embedding into the \emph{class
of all complex-valued, uniformly continuous and bounded functions
on $\rn$}, here denoted by $C_{ub} (\rn)$.

\begin{proposition}\label{grundp}
Let $s\in\rr$, $\tau\in[0,\infty)$ and  $p, q \in(0,\fz]$

${\rm(i)}$ If $s+n(\tau-1/p)>0$, then $B^{s,\tau}_{p,q}(\rn) \hookrightarrow C_{ub} (\rn)$\,.

${\rm(ii)}$  Let $p\in(0,\fz)$, $q\in(0,\fz]$, $\tau\in(0, 1/p)$ and
$s+ n\tau -\frac np =0$.
Then $
B^{s,\tau}_{p,q}(\rn) \not \subset C_{ub}(\rn) \, .$

${\rm(iii)}$ Let $p\in(0,\fz)$ and $q\in(0,\fz]$.
Then $B^{0, 1/p}_{p,q}(\rn)  \not \subset C_{ub}(\rn)\, .$
\end{proposition}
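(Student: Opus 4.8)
The plan is to prove the three assertions separately. For part (i), the key observation is that the hypothesis $s+n(\tau-1/p)>0$ pushes us into the regime described in Remark \ref{grund}(iv)/(iii): either the Besov-type parameter $\tau$ is large enough that $\bt$ coincides (as a set, with equivalent norms) with a classical H\"older--Zygmund space $B^{s+n(\tau-1/p)}_{\fz,\fz}(\rn)$ with positive smoothness index, or $\tau$ is small and one works directly with the dyadic decomposition. In the first case the embedding into $C_{ub}(\rn)$ is classical. In the general case I would argue by hand: write $f=\sum_j \vz_j\ast f$ (with $\vz_0$ replaced by $\Phi$), and use the Nikol'skij inequality to bound $\|\vz_j\ast f\|_{L^\fz(\rn)}$ by a constant times $2^{jn/p}\sup_{|P|=2^{-jn}}(\int_P|\vz_j\ast f|^p)^{1/p}$ over cubes $P$ of side length $2^{-j}$; taking $P$ to be the appropriate dyadic cube and inserting the normalization $|P|^{-\tau}=2^{jn\tau}$ shows that $\|\vz_j\ast f\|_{L^\fz(\rn)}\ls 2^{-j(s+n(\tau-1/p))}\|f\|_{\bt}$. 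Since $s+n(\tau-1/p)>0$, the series converges absolutely in $L^\fz(\rn)$, each summand is uniformly continuous and bounded (being band-limited, hence smooth with controlled derivatives), and a uniform limit of $C_{ub}$ functions lies in $C_{ub}$.

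For parts (ii) and (iii) the strategy is to exhibit an explicit counterexample, i.e.\ a function $f\in\bt$ on the critical line $s+n\tau-n/p=0$ (resp.\ $s=0$, $\tau=1/p$) that is unbounded, say near the origin. The natural candidates are lacunary-type constructions: take $f=\sum_{k=1}^\fz a_k\, \psi(2^{j_k}\cdot)$ where $\psi$ is a fixed smooth bump with Fourier support in an annulus, $j_k\uparrow\fz$ a rapidly increasing sequence, and $a_k$ chosen so that $\sum_k a_k=\fz$ (forcing unboundedness at $0$, where all the bumps, suitably translated/dilated to overlap there, pile up) while the Besov-type quasi-norm stays finite. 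On the critical line the scaling is exactly balanced: each block $\vz_j\ast f$ for $j$ near $j_k$ contributes $|a_k|\,2^{-j_k(s)}$ in the $L^\fz$ sense but, because we only integrate over a cube $P$ and renormalize by $|P|^{-\tau}$, the contribution to $\|f\|_{\bt}$ from the cube of side $2^{-j_k}$ containing the support is of order $|a_k|$ — so we need $\sup_k|a_k|<\fz$ together with $q$-summability handled by making the $j_k$ so sparse that distinct blocks live on disjoint scales. Choosing $a_k\equiv 1$ and a bump whose dilates accumulate at $0$ then gives a bounded-coefficient, hence $\bt$-bounded, function whose "value" at $0$ is the divergent sum $\sum_k 1$. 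For (iii), the case $s=0$, $\tau=1/p$ is the endpoint of this construction and is handled the same way (indeed it is a special case of the (ii)-type reasoning with $s=0$), so I would either deduce it from (ii) by an embedding or run the same lacunary example with $s=0$.

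The main obstacle I anticipate is part (ii)/(iii): one must verify carefully that the lacunary example has \emph{finite} $\bt$ quasi-norm, which requires controlling the supremum over \emph{all} dyadic cubes $P$ — not just the ones adapted to a single block. For a cube $P$ much larger than the support scale of a given block, the factor $|P|^{-\tau}$ is small but the $\ell^q$-sum in $j$ runs over many scales; for a cube much smaller, only finitely many (in fact essentially one) block is seen. The sparsity of the sequence $(j_k)$ must be tuned so that for every $P$ at most one block contributes nontrivially, reducing the general cube estimate to the single-block computation. A secondary technical point is ensuring the counterexample genuinely fails to have \emph{any} continuous (let alone $C_{ub}$) representative, not merely that the obvious representative is unbounded; this follows because the pointwise blow-up at the origin is stable under modification on null sets, as the function is continuous away from $0$ and has a genuine singularity there. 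Once these points are settled the rest is routine bookkeeping with the atomic/dyadic description recalled in Definition \ref{d1}.
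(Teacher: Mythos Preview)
The paper does not give a self-contained proof of this proposition; the remark immediately following it simply refers the reader to \cite[Proposition~2.6(i)]{ysy} and to \cite{s011}. So there is no in-paper argument to compare against, and your proposal in fact supplies considerably more than the paper does.

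Your outline is essentially correct. For (i), the Nikol'skij/sub-mean-value estimate $\|\vz_j\ast f\|_{L^\infty(\rn)}\ls 2^{-j(s+n\tau-n/p)}\|f\|_{\bt}$, obtained by testing the definition on a dyadic cube $P$ with $\ell(P)=2^{-j}$, is precisely the mechanism behind the embedding $\bt\hookrightarrow B^{s+n(\tau-1/p)}_{\infty,\infty}(\rn)$ recorded in \cite[Proposition~2.6]{ysy}; summing the geometric series in $j$ then gives uniform convergence to a $C_{ub}$ limit. This is the standard route and matches what the cited references do.

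For (ii) and (iii), your lacunary construction $f=\sum_k \psi(2^{j_k}\cdot)$ with $\widehat\psi$ supported in an annulus and $\psi(0)\neq 0$ is exactly the type of counterexample one finds in \cite{s011}. Two small points deserve care when you write it out. First, the finiteness of $\|f\|_{\bt}$ for \emph{all} dyadic cubes $P$ (not only those containing the origin) relies on the rapid off-support decay of $\psi(2^{j_k}\cdot)$; this is routine but must be said. Second, the verification that the supremum over cubes $P$ containing the origin stays bounded uses $\tau>0$ in an essential way: on the critical line the $j$-th contribution scales like $2^{-j_k n\tau}$, and one needs the geometric decay in $k$ coming from $\tau>0$ to control the $\ell^q$-sum after multiplying by $|P|^{-\tau}=2^{j_P n\tau}$. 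This is exactly why parts (ii) and (iii) both carry the hypothesis $\tau>0$, and your argument would fail (as it should) at $\tau=0$, where $B^{n/p}_{p,q}(\rn)\hookrightarrow C_{ub}(\rn)$ does hold for $q\le 1$.
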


\begin{remark}
For a proof of Proposition \ref{grundp}, we refer to \cite[Proposition 2.6(i)]{ysy}
and to \cite{s011}.
\end{remark}

For the convenience of the reader we also recall the definition of Besov-Morrey
spaces as follows.

\begin{definition}\label{yamazaki}
Let $\Phi$ and $\vz\in\cs(\rn)$ be as in \eqref{e1.0} and \eqref{e1.1}, respectively.
Let $s\in\rr$, $ 0 <  p \le u \le \infty $ and $0 < q \le \infty$.
Then the \emph{Besov-Morrey space} $\cn^s_{u,p,q}(\rn)$ is defined to be the set of all $f\in\cs'(\rn)$ such that
\begin{equation*}
\|f\|_{\cn^s_{u,p,q}(\rn)}:= \lf[\sum^{\infty}_{j= 0}
 2^{jsq}\,   \| \, \varphi_j \ast  f \, \|_{\cm^u_p(\rn)}^q \r]^{1/q} < \infty
\end{equation*}
with the usual modifications made in case $p=\fz$ and/or $q=\fz$, and with $\vz_0$
replaced by $\Phi$ if $j=0$.
\end{definition}

\begin{remark}\label{gleich3}
(i) The Besov-Morrey spaces $\cn^s_{u,p,q}(\rn)$ represent the Besov scale built on the Morrey space $\cm^u_{p}(\rn)$.
Kozono and Yamazaki \cite{KY} in 1994 and  later on Mazzucato \cite{ma03}
have been the first who investigated spaces of this type.
In fact, they studied two, slightly different, types of spaces.
The first modification consists in restricting the supremum within the definition of the Morrey norm to balls
with volume $\le 1$. Secondly, they studied homogeneous Besov-Morrey spaces $\dot{\cn}^s_{u,p,q}(\rn)$.
For more information on these spaces, we
refer to \cite{KY,ma03,tx,st} and the survey \cite{s011}.

(ii) A further family of relatives of $\bt$ and $\cn^s_{u,p,q}(\rn)$ has been  introduced and investigated by Triebel
in his recent book \cite{t12}.
\end{remark}

\begin{lemma}\label{emb1}
Let $s\in \rr$ and  $0 <  p \le u \le \infty$.

${\rm(i)}$ If $q\in (0,\infty)$, then $\cn^s_{u,p,q}(\rn)   \hookrightarrow
B^{s,\frac 1p- \frac 1u}_{p,q}(\rn)\, $.

${\rm(ii)}$ It also holds that $
\cn^s_{u,p,\infty} (\rn)=  B^{s,\frac 1p- \frac 1u}_{p,\infty}(\rn)$
in the sense of equivalent quasi-norms.

${\rm(iii)}$ Let $m \in \nn_0$ and $1 \le p \le u <\infty$. Then
\[
\cn^m_{u,p,\min(2,p)} (\rn) \hookrightarrow W^m \cm^u_p(\rn)
\hookrightarrow   \cn^{m}_{u, p,\infty} (\rn)\, .
\]
\end{lemma}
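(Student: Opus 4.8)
The plan is to prove the three assertions of Lemma \ref{emb1} in order, relying on the Fourier-analytic (Littlewood--Paley) definitions and known facts about Morrey spaces.

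For part (i), the key observation is that for any ball $P$ (equivalently, any dyadic cube) and any $p\le u$, the definition of the Morrey norm gives
\[
|P|^{1/u-1/p}\lf[\int_P |\vz_j\ast f(x)|^p\,dx\r]^{1/p}\le \|\vz_j\ast f\|_{\cm^u_p(\rn)}\, .
\]
Rearranging, $\frac{1}{|P|^{1/p-1/u}}\lf[\int_P |\vz_j\ast f(x)|^p\,dx\r]^{1/p}\le \|\vz_j\ast f\|_{\cm^u_p(\rn)}$, and this bound is uniform in $P$ and $j$. Plugging this into the definition of $\|f\|_{B^{s,1/p-1/u}_{p,q}(\rn)}$ with $\tau=\frac1p-\frac1u$, taking the $\ell^q$-sum in $j$ with the weights $2^{jsq}$ first and then the supremum over $P$, one gets exactly $\|f\|_{B^{s,1/p-1/u}_{p,q}(\rn)}\le \|f\|_{\cn^s_{u,p,q}(\rn)}$; note that when $j<j_P$ the summation range in the Besov-type norm only shrinks, so no difficulty arises there. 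The only mild subtlety is that in the Besov-type norm the sum over $j$ starts at $\max\{j_P,0\}$ whereas in $\cn^s_{u,p,q}$ it starts at $0$; but since we are discarding terms this is harmless. This requires $q<\infty$ only because for $q=\infty$ we want the sharper two-sided statement in (ii).

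For part (ii), the inclusion $\subset$ is exactly the computation above with $q=\infty$. For the reverse inclusion one must show that $\|f\|_{\cn^s_{u,p,\infty}(\rn)}\lesssim \|f\|_{B^{s,1/p-1/u}_{p,\infty}(\rn)}$. Here the point is that for fixed $j$, the Morrey norm $\|\vz_j\ast f\|_{\cm^u_p(\rn)}$ is a supremum over all balls $B$, and one splits this into balls of radius $\le 2^{-j}$ and balls of radius $> 2^{-j}$. For balls larger than $2^{-j}$ one covers them by finitely-overlapping dyadic cubes of comparable size and uses the Besov-type norm directly. For balls $B$ of radius $r\le 2^{-j}$ one uses that $\vz_j\ast f$ has Fourier support in an annulus of radius $\asymp 2^j$, so $\vz_j\ast f$ is, on the scale $2^{-j}$, essentially constant / controlled by a maximal function; more precisely, by the Plancherel--Pólya--Nikol'skij inequality one has $\|\vz_j\ast f\|_{L^\infty(B)}\lesssim |B'|^{-1/p}\|\vz_j\ast f\|_{L^p(B')}$ for a slightly dilated cube $B'$ of size $\asymp 2^{-j}$, which gives $|B|^{1/u-1/p}\|\vz_j\ast f\|_{L^p(B)}\lesssim |B|^{1/u}\|\vz_j\ast f\|_{L^\infty(B)}\lesssim 2^{-jn/u}\cdot 2^{jn/p}\|\vz_j\ast f\|_{L^p(B')}=|B'|^{1/u-1/p}\|\vz_j\ast f\|_{L^p(B')}$, reducing to the large-ball case. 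The main obstacle in the whole lemma lives here: making the passage from arbitrary small balls to dyadic cubes of size $2^{-j}$ rigorous, i.e.\ invoking the right band-limited pointwise estimate and controlling overlaps; however, this is a standard argument in the Morrey/Besov-type literature and one may quote \cite{ysy} or \cite{s011} for it.

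For part (iii), the right-hand embedding $W^m\cm^u_p(\rn)\hookrightarrow \cn^m_{u,p,\infty}(\rn)$ follows from the Fourier-multiplier (band-limited) characterization: applying $\vz_j\ast$ to $f$ and distributing derivatives, for $j\ge1$ one has $\|\vz_j\ast f\|_{\cm^u_p(\rn)}\lesssim 2^{-jm}\sum_{|\alpha|=m}\|\vz_j\ast D^\alpha f\|_{\cm^u_p(\rn)}\lesssim 2^{-jm}\|f\|_{W^m\cm^u_p(\rn)}$, using that convolution with $\vz_j$ (an $L^1$-normalized mollifier whose symbol vanishes near the origin to high order) is bounded on $\cm^u_p(\rn)$ uniformly in $j$; for $j=0$ one uses $\|\Phi\ast f\|_{\cm^u_p(\rn)}\lesssim\|f\|_{\cm^u_p(\rn)}$. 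Multiplying by $2^{jm}$ and taking the supremum over $j$ yields the claim. The left-hand embedding $\cn^m_{u,p,\min(2,p)}(\rn)\hookrightarrow W^m\cm^u_p(\rn)$ is proved by a Littlewood--Paley square-function argument: for each multi-index $|\alpha|\le m$ one writes $D^\alpha f=\sum_j \vz_j\ast D^\alpha f$ (with $\Phi$ at $j=0$), estimates $\|\vz_j\ast D^\alpha f\|_{\cm^u_p(\rn)}\lesssim 2^{j|\alpha|}\|\vz_j\ast f\|_{\cm^u_p(\rn)}\le 2^{jm}\|\vz_j\ast f\|_{\cm^u_p(\rn)}$, and sums; the role of the index $\min(2,p)$ is exactly to run the Littlewood--Paley / Minkowski argument for the sum $\sum_j$ inside the Morrey norm (for $p\le2$ one can use the triangle inequality in $\ell^{1}$ after Hölder, while for $p>2$ one needs the $\ell^2$-valued boundedness of the relevant operators on $\cm^u_p$, which is where the constraint $1\le p$ and the Fefferman--Stein-type vector-valued maximal inequality on Morrey spaces enters). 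One may cite \cite{KY}, \cite{ma03} or \cite{s011} for the vector-valued maximal inequality on $\cm^u_p(\rn)$ that underlies this step. The restriction $u<\infty$ is needed precisely so that this maximal inequality is available.
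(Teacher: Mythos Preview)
Your argument is correct. Note that the paper does not give its own proof of this lemma: the remark immediately following it cites \cite{syy} and \cite{s011} for parts (i)--(ii), recording that those references ``argued with atomic decompositions'', and refers to Sawano \cite{sa0} and \cite{s011} for part (iii). Your approach is therefore genuinely different for (i) and, to a lesser extent, (ii). For (i) your one-line computation --- bounding $|P|^{-(1/p-1/u)}\|\varphi_j\ast f\|_{L^p(P)}$ by $\|\varphi_j\ast f\|_{\cm^u_p(\rn)}$ straight from the definition of the Morrey norm --- is considerably more elementary than an atomic-decomposition argument; what the atomic route in \cite{syy} buys in return is information on \emph{properness} of the embedding when $p<u$, which your direct estimate cannot detect. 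For (ii) your Plancherel--P\'olya--Nikol'skij reduction of small balls to balls of scale $2^{-j}$ is the standard Fourier-analytic alternative to atoms and is arguably cleaner. For (iii) your Littlewood--Paley sketch is in line with what one finds in the cited references; in particular your identification of the Fefferman--Stein vector-valued maximal inequality on $\cm^u_p(\rn)$ as the place where $1\le p\le u<\infty$ enters is exactly the content of those proofs.
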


\begin{remark}
Lemma \ref{emb1}(i) and (ii) have been proved in \cite{syy}; see also \cite[Proposition~7]{s011}.
There the authors argued with atomic decompositions.
In addition they have been able to show that the embedding in Lemma \ref{emb1}(i) is proper if $p < u$.
Concerning part (iii) of Lemma \ref{emb1}
we refer to Sawano \cite{sa0} and \cite[Lemma~5]{s011}.
\end{remark}

%&&&&&&&&&&&&&&&&&&&&&&&&&&&&&&&&&&&&&&&&&&&&&&&&&&&&&&&&&&
%&&&&&&&&&&&&&&&&&&&&&&&&&&&&&&&&&&&&&&&&&&&&&&&&&&&&&&&&&&

\subsection{Radial subspaces}\label{rs}

%&&&&&&&&&&&&&&&&&&&&&&&&&&&&&&&&&&&&&&&&&&&&&&&&&&&&&&&&&&
%&&&&&&&&&&&&&&&&&&&&&&&&&&&&&&&&&&&&&&&&&&&&&&&&&&&&&&&&&&

\hskip\parindent
Let $U$ be an isometry of $\rn$ and   $g\in\cs(\rn)$.
Then we define $g^U(x):=g(Ux)$,  $x\in\rn$. For $f\in\cs'(\rn)$ we put
\[
f^U(g):=f(g^{U^{-1}})\, , \qquad g\in\cs(\rn)\, ,
\]
where $U^{-1}$
denotes the isometry inverse to $U$. Then $f^U$ is also a distribution in $\cs'(\rn)$.

Let $SO(\rn)$ be the \emph{group of rotations around the origin in $\rn$}.
We say that $f\in\cs'(\rn)$ is \emph{invariant with respect to $SO(\rn)$}
if $f^U=f$ for all $U\in SO(\rn)$. Now we are able to define the radial subspaces
which we are interested in.

\begin{definition}\label{d3.1}
Let $s,\,\tau,\,p$ and $q$ be as in Definition \ref{d1}. The \emph{radial subspace}
$R\bt$ of the space $\bt$ is defined by
$$R\bt:=\lf\{f\in \bt:\ f\ \mathrm{is}\ \mathrm{invariant}\ \mathrm{with}\
\mathrm{respect}\ \mathrm{to}\ SO(\rn)\r\}.$$
\end{definition}

Since the linear operator $T_Uf:=f^U-f$
is bounded on $\bt$ for all $U\in SO(\rn)$
and
\[
R\bt = \bigcap_{U\in SO(\rn)} \mathrm{Ker}\,T_U \, ,
\]
we know that $R\bt$ is a closed subspace of $\bt$,
and hence a quasi-Banach space with respect to the induced quasi-norm.

%&&&&&&&&&&&&&&&&&&&&&&&&&&&&&&&&&&&&&&&&&&&&&&&&&&&&&&&&&&
%&&&&&&&&&&&&&&&&&&&&&&&&&&&&&&&&&&&&&&&&&&&&&&&&&&&&&&&&&&

\subsection{Atomic decompositions}\label{ado}

%&&&&&&&&&&&&&&&&&&&&&&&&&&&&&&&&&&&&&&&&&&&&&&&&&&&&&&&&&&
%&&&&&&&&&&&&&&&&&&&&&&&&&&&&&&&&&&&&&&&&&&&&&&&&&&&&&&&&&&

\hskip\parindent
As mentioned above our main tool in all our investigations
is the description of these radial subspaces $R\bt$
by means of atoms.
Starting point is a rather general characterization of $\bt$ in terms of atoms.
Since this is, in a certain sense,  parallel to  what has been done in
Frazier and Jawerth \cite{fj85,fj90} and also \cite{ss00}, we shifted this more
technical part to the Appendix at the end of this paper.

Atomic decompositions are always connected with a sequence of coverings of $\rn$.
Here we are interested in a very special sequence adapted to the radial situation.
To begin with we recall some notions; see
Skrzypczak in \cite{s98}, but also \cite[Definition 1]{ss00}.
The  notion of an atom, we are using here, represents a certain modification of the
definition given  in Frazier and Jawerth \cite{fj85,fj90}.
For any open set $Q\subset\rn$ and any  $r\in(0,\fz)$, we put $rQ:=\{x\in\rn:\ \dist(x,Q)<r\}.$
Also we use the abbreviation $p':= p/(p-1)$ if
$1 < p \le \infty$ and $p':= \infty $ if $0 < p \le 1$.

\begin{definition}\label{d2.1}
Let $s\in\rr$, $p\in(0,\fz]$, $r\in(0,\fz)$, $L$ and $M$ be integers such that $L\ge0$ and
$M\ge-1$. Assume that $Q\subset \rn$ is an open connected set with $\mathrm{diam}\,Q=r$.

{\rm(i)} A smooth function $a$ is called an \emph{$1_L$-atom centered in $Q$}
if  $\supp a \subset \frac r2Q$ and
$$\sup_{y\in\rn}|\partial^\az a(y)|\le 1\quad \mathrm{for}\quad |\az|\le L.$$

{\rm(ii)} A smooth function $a$ is called an
\emph{$(s,p)_{L,M}$-atom centered in $Q$} if
$\supp a \subset \frac r2Q$,
$$\sup_{y\in\rn}|\partial^\az a(y)|\le r^{s-|\az|-\frac np}
\quad \mathrm{for}\quad |\az|\le L$$
and
$$\left|\int_\rn a(y)\vz(y)\,dy\right|\le r^{s+M+1+n/p'}\|\vz\|_{C^{M+1}(\overline{rQ})}$$
for all $\vz\in C^\fz(\rn)$, where $\|\vz\|_{C^{M+1}(\overline{rQ})}
:=\sup_{x\in\overline{rQ}}\sup_{|\az|\le M+1} |\partial^\az\vz(x)|$.
\end{definition}

\begin{remark}
We recall that when $M=-1$, then the second  condition in
 Definition \ref{d2.1}(ii) is void. Of certain use is the following simple observation:
if $a$ is an $(s_0,p)_{L,M}$-atom, then $r^{s_1-s_0}\, a$ is also an $(s_1,p)_{L,M}$-atom.
\end{remark}

\subsection*{Regular coverings}

\hskip\parindent
We follow \cite{ss00}.
Let $\{Q_\ell\}_{\ell}$ be a covering of $\rn$ by connected open sets $Q_\ell$.
We put
\[
C_Q := \sup_{x\in \rn} |\{\ell : \quad Q_\ell \ \ \mbox{contains}\ \ x\}|
\]
(here $|\, \cdot \, |$ denotes the \emph{cardinality} of the set).
We call this number the \emph{multiplicity of the covering $\{Q_\ell\}_{\ell}$}.
A covering with finite multiplicity is called \emph{uniformly locally finite}.
Let $(\Omega_j:=\{\Omega_{j,\ell}\}_{\ell=0}^\fz)_{j=0}^\fz$ be a sequence
of uniformly locally finite coverings of $\rn$. The supremum of multiplicities of
the coverings $\Omega_j$, $j\in\nn_0,$ is called
the \emph{multiplicity of the sequence  $(\Omega_j)_{j=0}^\fz$}.

\begin{definition}\label{d2.2}
A sequence $(\Omega_j)_{j=0}^\fz:=
(\{\Omega_{j,\ell}\}_{\ell=0}^\fz)_{j=0}^\fz$ of coverings of $\rn$
is \emph{regular} if the following conditions are satisfied:

(i) $\rn\subset \cup_{\ell\in\nn_0} \overline{\Omega_{j,\ell}}$ for all
$j\in \nn_0;$

(ii) there exists some positive number $\varepsilon_0$ such that for all
$\varepsilon\in(0,\varepsilon_0)$, the sequences of coverings $\{\varepsilon2^{-j}\Omega_{j,\ell}\}_{\ell=0}^\fz$ have
finite multiplicity with uniform bound of multiplicity with respect to $\varepsilon;$

(iii) there exist positive numbers $B_n$ and $C_n$, depending
only on the dimension $n$, such that
$\mathrm{diam}\,\Omega_{j,\ell}\le B_n 2^{-j}$ and $C_n2^{-jn}\le |\Omega_{j,\ell}|.$
\end{definition}

\begin{remark}\label{Konstanten}
 \rm
(i) By $\omega_n$ we denote the \emph{volume of the unit ball
in $\rn$}. Let $A_n := (C_n/\omega_n)^{1/n}$.
Then Definition \ref{d2.2}(iii) implies that
$A_n \, 2^{-j} \le \diam \Omega_{j, \ell} \le B_n \,  2^{-j}$
and
\[
C_n \, 2^{-jn}\le
|\Omega_{j, \ell}| \le B_n^n \,\omega_n \, 2^{-jn}
\]
for all $j$ and all $\ell$.

(ii)
If $(\{\Omega_{j,\ell}\}_{\ell=0}^\fz)_{j=0}^\fz$ is a regular sequence
of coverings, the cardinality of the sets,
$$I_{j,\ell}:=\{k:\ \Omega_{j,\ell}\cap \Omega_{j,k}\neq \emptyset\}\, , \qquad j,\, k \in \nn_0\, , $$
is uniformly bounded; see \cite[Lemma 1]{ss00}.
\end{remark}

Next we  define some  sequence spaces with respect to a regular sequence $(\{\Omega_{j,\ell}\}_{\ell=0}^\fz)_{j=0}^\fz$
of coverings.

\begin{definition}\label{d2.3}
Let $\tau\in[0,\fz)$ and  $p, q\in(0,\fz]$. Let
$\Omega:=(\{\Omega_{j,\ell}\}_{\ell=0}^\fz)_{j=0}^\fz$ be a regular sequence
of coverings.
Then the sequence space $b(p,q,\tau,\Omega)$ is defined to
be the set of all sequences $t:=\{t_{j,\ell}\}_{j,\ell=0}^\fz\subset\cc$ such that
\begin{eqnarray*}
\|t\|_{b(p,q,\tau,\Omega)}:=\sup_{P\in\mathcal{Q}}\frac1{|P|^\tau}
\left\{\sum_{j=\max\{j_P,0\}}^\fz \left[\sum_{\gfz{\ell\in\nn_0}{\Omega_{j,\ell}\cap P
\neq \emptyset}}|t_{j,\ell}|^p\right]^{q/p}\right\}^{1/q}<\fz.
\end{eqnarray*}
\end{definition}

\begin{remark}
 \rm
In case $\tau<1/p$, the replacement of  the sum  $\sum_{j=\max\{j_P,0\}}^\fz$
in Definition \ref{d2.3} by $\sum_{j=0}^\fz$ yields an equivalent quasi-norm.
For a proof  we refer to \cite{syy}.
\end{remark}

\subsection*{Regular coverings and associated atomic decompositions}

\hskip\parindent
The spaces $\bt$ are said to admit an atomic decomposition with respect to the sequence
$(\{\Omega_{j,\ell}\}_{\ell=0}^\fz)_{j=0}^\fz$ if there exist integers $L$ and $M$ such that

(i) any $f\in\bt$ can be represented by
\begin{eqnarray}\label{e2.1}
f=\sum_{j=0}^\fz\sum_{\ell=0}^\fz t_{j,\ell}\,a_{j,\ell}
\end{eqnarray}
in $\cs'(\rn)$, where
$a_{0,\ell}$ is an $1_L$-atom centered at the set $\Omega_{0,\ell}$,
$a_{j,\ell}$ with $j\in\nn$ is an $(s,p)_{L,M}$-atom centered at the set $\Omega_{j,\ell}$, and
$\{t_{j,\ell}\}_{j,\ell\in\nn_0}$ is a sequence of complex numbers
satisfying that
\begin{eqnarray}\label{e2.2}
\|\{t_{j,\ell}\}_{j,\ell\in\nn_0}\|_{b(p,q,\tau,\Omega)}<\fz;
\end{eqnarray}

(ii) any $f\in\cs'(\rn)$ given by \eqref{e2.1} with \eqref{e2.2}
is an element of $\bt$;

(iii) the infimum of \eqref{e2.2} with respect to all admissible
representations of $f$ is an equivalent norm in $\bt$.

Next we show that if $\{\Omega_{j}\}_{j=0}^\fz$ is
a regular sequence of coverings, then for some
 $L$ and $M$, the spaces $\bt$ admit an atomic decomposition with
respect to $\{\Omega_{j}\}_{j=0}^\fz$. In what follows, the \emph{symbol}
$\lfloor \az\rfloor$ for any $\az\in\rr$
denotes the maximal integer not more than $\az$.

\begin{theorem}\label{t2.1}
Let $s\in\rr$, $\tau\in[0,\fz)$ and $p,\,q\in(0,\fz]$. Let  $\{\Omega_{j}\}_{j=0}^\fz$ be
a regular sequence of coverings and let $L$ and $M$ be integers such that
\[
L\ge \max\{\lfloor s+n\tau\rfloor+1,0\} \qquad \mbox{and}\qquad
M\ge \max\{\lfloor \sigma_{p}-s\rfloor,-1\} \,,
\]
where $\sz_p:=n\,(\max\{0, 1/p-1\})$.
Then the spaces $\bt$ admit an atomic decomposition with
respect to $\{\Omega_{j}\}_{j=0}^\fz$.
\end{theorem}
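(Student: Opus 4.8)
The proof proceeds by the standard two-halves strategy for atomic decompositions — first the synthesis direction (any $f$ given by \eqref{e2.1}, \eqref{e2.2} lies in $\bt$ with controlled norm), then the analysis direction (any $f\in\bt$ admits such a representation) — but both halves are reduced to the already-known atomic characterization of $\bt$ with respect to the \emph{dyadic} covering $\mathcal{Q}$, which we invoke from the Appendix. The key geometric fact making this reduction work is Remark \ref{Konstanten}: for a regular sequence $(\Omega_j)_{j=0}^\fz$, each $\Omega_{j,\ell}$ has diameter $\asymp 2^{-j}$ and volume $\asymp 2^{-jn}$, so it is comparable to a bounded number of dyadic cubes of generation $j$, and conversely each dyadic cube $Q_{j,k}$ meets only a uniformly bounded number of the $\Omega_{j,\ell}$. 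Thus an $(s,p)_{L,M}$-atom centered in $\Omega_{j,\ell}$ is, up to a fixed multiplicative constant depending only on $n$, $s$, $p$, $B_n$, $A_n$, a finite sum of $(s,p)_{L,M'}$-atoms (or dyadic atoms in the sense of Frazier--Jawerth) associated to dyadic cubes of the same generation, and vice versa; the side-length normalizations $r^{s-|\alpha|-n/p}$ and $r^{s+M+1+n/p'}$ transform correctly because $r=\diam\Omega_{j,\ell}\asymp\ell(Q_{j,k})$.

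First I would fix the regular sequence $\Omega$ and choose $L$, $M$ as in the hypotheses; note these are exactly the thresholds under which the dyadic atomic decomposition theorem for $\bt$ holds (the condition $L\ge\max\{\lfloor s+n\tau\rfloor+1,0\}$ controls the smoothness needed for the ``upper'' estimate, and $M\ge\max\{\lfloor\sigma_p-s\rfloor,-1\}$ the number of moment/near-moment conditions needed for the ``lower'' estimate). For the synthesis direction, given $f=\sum_{j,\ell}t_{j,\ell}a_{j,\ell}$ with $\|\{t_{j,\ell}\}\|_{b(p,q,\tau,\Omega)}<\fz$, I would regroup: for each $(j,\ell)$ write $a_{j,\ell}=\sum_{k:\,Q_{j,k}\cap\frac r2\Omega_{j,\ell}\neq\emptyset}\eta_{j,k}\,\widetilde a_{j,k}^{(\ell)}$ using a smooth partition of unity subordinate to the dyadic cubes of generation $j$, where each $\widetilde a_{j,k}^{(\ell)}$ is (a constant multiple of) a dyadic $(s,p)$-atom. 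This produces a dyadic atomic representation of $f$ with new coefficients $\widetilde t_{j,k}:=\sum_{\ell:\,\Omega_{j,\ell}\cap Q_{j,k}\neq\emptyset}|t_{j,\ell}|$; by the uniform bound on $|I_{j,\ell}|$ and on the overlap, one checks $\|\{\widetilde t_{j,k}\}\|_{\text{dyadic sequence space}}\lesssim\|\{t_{j,\ell}\}\|_{b(p,q,\tau,\Omega)}$ (this is where one compares the two sequence-space norms, cube by cube, using that a dyadic cube $P$ and the $\Omega_{j,\ell}$'s meeting it are all of comparable size and location). Then the dyadic decomposition theorem gives $f\in\bt$ with the right norm bound. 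For the analysis direction, start from the known dyadic atomic decomposition $f=\sum_{j,k}\lambda_{j,k}b_{j,k}$ of a given $f\in\bt$, and now regroup the other way: assign each dyadic cube $Q_{j,k}$ to one $\Omega_{j,\ell}$ that it meets (such an $\ell$ exists by Definition \ref{d2.2}(i)), collect the corresponding atoms into a single atom $a_{j,\ell}:=\sum_{k\mapsto\ell}\lambda_{j,k}b_{j,k}/t_{j,\ell}$ with $t_{j,\ell}$ chosen as the natural normalizing constant; verify $a_{j,\ell}$ is (a fixed multiple of) an $(s,p)_{L,M}$-atom centered in $\Omega_{j,\ell}$ — support in $\frac r2\Omega_{j,\ell}$ holds because the constituent dyadic cubes are near $\Omega_{j,\ell}$ and have small size, the derivative bounds follow by summing a bounded number of dyadic-atom bounds, and the moment conditions are inherited since each $b_{j,k}$ has them. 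Finally bound $\|\{t_{j,\ell}\}\|_{b(p,q,\tau,\Omega)}\lesssim\|\{\lambda_{j,k}\}\|_{\text{dyadic}}\lesssim\|f\|_{\bt}$ by the same cube-by-cube comparison. Part (iii), the equivalence of norms, follows by combining the two directions.

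The main obstacle is the bookkeeping in comparing the two sequence-space norms $b(p,q,\tau,\Omega)$ and its dyadic counterpart $\dot b^{\,\tau}$-type norm. One must show that for every dyadic cube $P\in\mathcal{Q}$, the inner double sum $\sum_{j\ge\max\{j_P,0\}}2^{jsq}[\sum_{\Omega_{j,\ell}\cap P\neq\emptyset}|t_{j,\ell}|^p]^{q/p}$ is comparable to the analogous sum over dyadic cubes $Q_{j,k}\cap P\neq\emptyset$; the subtlety is that ``$\Omega_{j,\ell}$ meets $P$'' and ``$Q_{j,k}\subset P$'' are not literally the same localization, since an $\Omega_{j,\ell}$ meeting $P$ near its boundary can stick out of $P$ and overlap dyadic cubes outside $P$ — but only by a distance $\lesssim 2^{-j}$, hence only finitely many generations matter at the boundary and an enlargement of $P$ by a bounded dilation (absorbed into the supremum over all $P'\in\mathcal{Q}$ of comparable size, of which there are boundedly many covering the dilate) handles it. This, together with keeping the constants $L$, $M$, $B_n$, $A_n$ explicit so the atom-class membership survives the regrouping, is the technical heart; everything else is routine. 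Because of the shared nature with Frazier--Jawerth \cite{fj85,fj90} and \cite{ss00}, I would state the lemma, give the two regroupings precisely, and refer to the Appendix for the dyadic case and for the overlap-counting estimates.
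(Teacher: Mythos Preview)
Your plan is broadly sensible, but the synthesis step as written has a genuine gap: multiplying an $(s,p)_{L,M}$-atom $a_{j,\ell}$ by a partition-of-unity cutoff $\eta_{j,k}$ destroys the moment condition. Writing $\int \eta_{j,k}\, a_{j,\ell}\, \varphi = \int a_{j,\ell}\, (\eta_{j,k}\varphi)$ and applying the atom bound gives only $r^{s+M+1+n/p'}\|\eta_{j,k}\varphi\|_{C^{M+1}}$, and since $|D^\beta \eta_{j,k}| \lesssim r^{-|\beta|}$ the factor $\|\eta_{j,k}\varphi\|_{C^{M+1}}$ can be as large as $r^{-(M+1)}\|\varphi\|_{C^{M+1}}$. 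The net bound is $r^{s+n/p'}\|\varphi\|_{C^{M+1}}$, i.e.\ only the $M'=-1$ condition. Hence the pieces $\widetilde a_{j,k}^{(\ell)}$ are \emph{not} dyadic $(s,p)_{L,M}$-atoms, and the dyadic synthesis theorem does not apply when $s\le\sigma_p$. (Your remark that ``moment conditions are inherited'' is correct in the analysis direction, where you \emph{sum} atoms; it fails in the synthesis direction, where you \emph{split} them.)

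The paper avoids this by not cutting at all. For synthesis (Substep~2.2 of the Appendix) it observes that the regular covering $(\Omega_{j,\ell})$ sits inside a regular covering by balls $(B_{j,\ell})$ with $\Omega_{j,\ell}\subset B_{j,\ell}$; an $\Omega$-atom is then \emph{automatically} a $B$-atom up to a fixed constant (same function, no cutting, moments intact), and for such a standard ball covering the synthesis is quoted from \cite[Theorem~3.3]{ysy}. For the analysis direction the paper does not pass through the dyadic decomposition either: it runs the Calder\'on reproducing formula directly, setting $t_{j,\ell}\asymp 2^{j(s-n/p)}\sup_{\Omega_{j,\ell}}|\widetilde\varphi_j\ast f|$ and building $a_{j,\ell}$ as an integral over the disjointified set $\Omega^*_{j,\ell}$; the sequence-norm bound then follows from a Peetre-type pointwise maximal inequality rather than from a cube-by-cube comparison of two sequence spaces. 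This explicit construction matters downstream: the proof of Theorem~\ref{t3.1}(ii) uses precisely that $t_{j,k,\ell}$ is a supremum of a radial function over $\Omega^R_{j,k,\ell}$, hence independent of $\ell$ by Lemma~\ref{l3.1}(h) --- a feature your regrouping approach would not immediately deliver.
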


\begin{remark}
This theorem represents an extension of the well-known
characterizations by atoms discussed in Frazier and Jawerth
\cite{fj85,fj90}. More exactly, choosing
\[
\Omega_j := \{Q_{j,k}: \quad k \in \zz^n\}\, , \qquad j \in \nn_0\,
\]
(see (\ref{dyadic})) and $\tau =0$, then Theorem \ref{t2.1} has been proved in the
quoted papers by Frazier and Jawerth.
Regular coverings and related atomic decompositions, also restricted to $\tau =0$,
have been discussed in  \cite{ss00}; see also \cite{s02}.
Atomic decompositions of Besov-type spaces have been investigated in
\cite{ysy} and \cite{lsuyy}.
Our proof of Theorem \ref{t2.1} is more or
less parallel to those given in the quoted papers.
For that reason we shift it into the Appendix at the end of this paper.
\end{remark}

%&&&&&&&&&&&&&&&&&&&&&&&&&&&&&&&&&&&&&&&&&&&&&&&&&&&&&&&&&&
%&&&&&&&&&&&&&&&&&&&&&&&&&&&&&&&&&&&&&&&&&&&&&&&&&&&&&&&&&&

\subsection{Atomic decompositions of radial subspaces}\label{adors}

%&&&&&&&&&&&&&&&&&&&&&&&&&&&&&&&&&&&&&&&&&&&&&&&&&&&&&&&&&&
%&&&&&&&&&&&&&&&&&&&&&&&&&&&&&&&&&&&&&&&&&&&&&&&&&&&&&&&&&&

\hskip\parindent
Now we make use of the flexibility in the choice of the regular coverings.
In \cite[Section 3.2]{ss00} Sickel and Skrzypczak constructed a sequence of
regular coverings which is well adapted to the radial situation.

A basic role in this construction is played by
the following shells (balls if $k=0$):
\[
P_{j,k}:= \lf\{ x \in \rn \, : \quad k \, 2^{-j} \le |x| <
(k+1)\, 2^{-j}\r\}\, , \qquad j,\,k\in\nn_0\, .
\]

\begin{lemma}\label{l3.1}
Let $n\ge 2$.
There exists a regular sequence
$$(\Omega^R_j)_{j\in\nn_0}:=\lf(\{\Omega^R_{j,k,\ell}\}_{k\in\nn_0,
\ell\in\{1,\ldots,C(n,k)\}}\r)_{j\in\nn_0}$$ of coverings with finite multiplicity satisfying that

\rm{(a)}
all $\Omega^R_{j,k,\ell}$ are balls  with centers $y_{j,k,\ell}$ satisfying that
\[
|y_{j,k,\ell}| = \left\{\begin{array}{lll}
2^{-j}\, (k+1/2) & \qquad & \mbox{if}\quad k\in\nn\, , \\
0 && \mbox{if}\quad k=0\, ;
\end{array}\right.
\]

\rm{(b)}
$$P_{j,k} \subset  \bigcup_{\ell=1}^{C(n,k)}
\Omega^R_{j,k,\ell}\, , \qquad j\in\nn_0\, ; $$

\rm{(c)}
$\diam \Omega^R_{j,k,\ell} = 12 \, \cdot \,  2^{-j}\,  ; $

\rm{(d)} the sums 
$$\sum_{k=0}^\infty \sum_{\ell=1}^{C(n,k)}
\chi_{j,k,\ell}(x)$$  
are uniformly bounded in $x\in \rn$ and
$j\in\nn_0 $ (here $\chi_{j,k,\ell}$ denotes the
characteristic function of $\Omega^R_{j,k,\ell}$);

\rm{(e)}
$$\Omega^R_{j,k,\ell}= \{x\in \rn\, : \: 2^j\, x \in
\Omega^R_{0,k,\ell}\}, \, \, j\in\nn_0\,; $$

\rm{(f)} $\, \, C(n,0)=1$, $C(n,k)\in\nn$ and $C(n,k)\asymp k^{n-1}$,  when $k\in\nn$;

\rm{(g)}
with an appropriate enumeration it holds that
\[
\{(x_1,0,\ldots , 0)\, : \quad x_1 \ge 0\} \subset
\bigcup_{k=0}^\infty \Omega^R_{j,k,1}
\]
and
\[
\lf|\, \{(x_1,0,\ldots , 0)\, : \quad x_1 \in \rr\} \cap
\Omega^R_{j,k,1}\, \r| \: \asymp \: 2^{-j}\, .
\]

\rm{(h)} for all $k\in\nn$ and $\ell\in\{1,\ldots,C(n,k)\}$ there exists an element $U_{k,\ell}\in SO(\rn)$
such that $\Omega^R_{j,k,\ell}=U_{k,\ell}(\Omega^R_{j,k,1});$

\rm{(i)} Let $\varepsilon\in(0,\varepsilon_0)$. Then the multiplicity of the sequence
$$\lf(\lf\{\varepsilon\Omega^R_{j,k,\ell}:\ \ell\in\{1,\ldots, C(n,k)\}, k\in\nn_0\r\}\r)_{j\in\nn_0}$$
is finite with multiplicity constant independent of $\varepsilon$.
\end{lemma}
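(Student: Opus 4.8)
The plan is to construct the covering $(\Omega^R_j)_{j\in\nn_0}$ explicitly by first doing the case $j=0$ and then rescaling, so that condition (e) holds automatically. First I would fix the shell $P_{0,0}=\{|x|<1\}$ and simply set $\Omega^R_{0,0,1}$ to be the ball $B(0,6)$; this gives $C(n,0)=1$, contains $P_{0,0}$, and has diameter $12$, taking care of (a)--(c) for $k=0$. For $k\in\nn$, I work on the shell $P_{0,k}=\{k\le|x|<k+1\}$, whose ``middle sphere'' has radius $k+1/2$. The idea is to place a controlled number $C(n,k)$ of balls of radius $6$, all with centers on this sphere $\{|y|=k+1/2\}$, so that their union covers $P_{0,k}$. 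Since the radial thickness of $P_{0,k}$ is $1$ and each ball has radius $6$, a ball $B(y_{0,k,\ell},6)$ with $|y_{0,k,\ell}|=k+1/2$ already covers the full radial extent near its center; the only issue is covering the sphere directionally. A ball of radius $6$ centered at distance $k+1/2$ from the origin subtends, on the sphere of radius $k+1/2$, a spherical cap of fixed angular radius bounded below by $c/(k+1/2)$ in the relevant range (and of angular radius $\ge$ a fixed constant when $k$ is small); choosing a maximal $(c/(k+1))$-separated set of points on $\{|y|=k+1/2\}$, whose cardinality is $\asymp k^{n-1}$, and declaring these to be the centers $y_{0,k,\ell}$, $\ell=1,\dots,C(n,k)$, gives (a), (b), (c), and (f). For (g) and (h) I would insist that the enumeration always lists as $\ell=1$ the center lying on the positive $x_1$-axis, i.e. $y_{0,k,1}=((k+1/2),0,\dots,0)$, and that for each $\ell$ one fixes a rotation $U_{k,\ell}\in SO(\rn)$ with $U_{k,\ell}(k+1/2,0,\dots,0)=y_{0,k,\ell}$, so that $\Omega^R_{0,k,\ell}=U_{k,\ell}(\Omega^R_{0,k,1})$; then (h) and the first part of (g) are immediate, and $|\{(x_1,0,\dots,0):x_1\in\rr\}\cap\Omega^R_{0,k,1}|\asymp 1$ follows from the geometry of a radius-$6$ ball crossing the $x_1$-axis roughly perpendicularly.

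Having built $\Omega^R_0$, I define $\Omega^R_{j,k,\ell}:=2^{-j}\,\Omega^R_{0,k,\ell}$, which is (e); then $y_{j,k,\ell}=2^{-j}y_{0,k,\ell}$ and $\diam\Omega^R_{j,k,\ell}=12\cdot 2^{-j}$, giving (a) and (c) in general, while (b) follows since $P_{j,k}=2^{-j}P_{0,k}$, and (f) is unchanged. The overlap estimates (d) and (i) are the analytic heart of the argument. For (d), fix $x\in\rn$ and $j\in\nn_0$; since $\Omega^R_{j,k,\ell}\ni x$ forces $\bigl|\,|x|-2^{-j}(k+1/2)\bigr|<6\cdot 2^{-j}$, only $O(1)$ values of $k$ can occur, and for each such $k$ the points $x$ lies within $6\cdot 2^{-j}$ of are centers from a $(c\,2^{-j}/(k+1))$-separated set on a sphere of radius $2^{-j}(k+1/2)$ in $\rn$, of which at most $O(1)$ can be that close (a volume/packing count on the sphere, using $C(n,k)\asymp k^{n-1}$); summing over the $O(1)$ admissible $k$ gives a uniform bound. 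Condition (i) is the same computation with radius $6$ replaced by $6\varepsilon$: shrinking the balls only decreases the overlap, so the bound from (d) (which is the $\varepsilon=1$ case, up to constants) dominates uniformly in $\varepsilon\in(0,\varepsilon_0)$; this simultaneously verifies Definition \ref{d2.2}(ii) with $\varepsilon_0=1$, and Definition \ref{d2.2}(iii) holds with $B_n=12$ and $C_n=\omega_n 6^n$ since each $\Omega^R_{j,k,\ell}$ is a ball of radius $6\cdot 2^{-j}$. Thus $(\Omega^R_j)_{j\in\nn_0}$ is a regular sequence of coverings with all the listed properties.

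The step I expect to be the main obstacle is the uniform directional covering and separation estimate on the spheres $\{|y|=k+1/2\}$, i.e. making precise that a radius-$6$ ball at distance $k+1/2$ captures a cap of angular radius comparable to $1/(k+1)$ and that one can both cover the sphere with $\asymp k^{n-1}$ such caps and keep the centers separated enough that the multiplicity in (d)/(i) stays bounded \emph{uniformly in $k$}; the transition between ``$k$ small'' (where angular radii are bounded below by a constant, not by $1/(k+1)$) and ``$k$ large'' has to be handled so that a single enumeration and a single set of constants works for all $k\in\nn_0$. Everything else is either a direct rescaling from the $j=0$ case or a standard packing/covering count on $\rn$ and on spheres; I would therefore spend the bulk of the write-up on the $k$-uniform geometry of the shells, citing \cite[Section 3.2]{ss00} for the construction and the bookkeeping details.
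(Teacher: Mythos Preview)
Your sketch is essentially the construction the paper has in mind: the paper does not prove Lemma~\ref{l3.1} in the text but imports it from \cite[Section~3.2]{ss00}, and the picture given (with angle $\alpha=(2\pi)/(2k+1)$ in the planar case) is exactly the ``centers equidistributed on the mid-sphere of each shell, then rescale by $2^{-j}$'' idea you describe. Your use of a maximal separated net on $\{|y|=k+1/2\}$ is a legitimate variant of the explicit equispacing used in \cite{ss00}; both give $C(n,k)\asymp k^{n-1}$, centers at Euclidean distance $\asymp 1$ from each other, and hence the packing bound (d).

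There is one notational misreading you should fix. In this paper (see the paragraph before Definition~\ref{d2.1}) the symbol $rQ$ does \emph{not} mean the dilation of $Q$ by $r$; it means the $r$-neighborhood $\{x:\dist(x,Q)<r\}$. Consequently $\varepsilon\,\Omega^R_{j,k,\ell}$ in part~(i), and $\varepsilon 2^{-j}\Omega_{j,\ell}$ in Definition~\ref{d2.2}(ii), are \emph{enlargements} of the balls (to radius $6\cdot 2^{-j}+\varepsilon$, respectively $(6+\varepsilon)2^{-j}$), not shrinkings. So your sentence ``shrinking the balls only decreases the overlap'' points the wrong way and does not prove (i). The repair is easy and uses the same ingredients you already have: since the centers $y_{j,k,\ell}$ on a fixed shell are $\asymp 2^{-j}$-separated in $\rn$ and only $O(1)$ values of $k$ are admissible for any given $x$, a ball of radius $(6+\varepsilon_0)2^{-j}$ around $x$ still contains only $O(1)$ centers, uniformly in $j$, $k$ and $\varepsilon\in(0,\varepsilon_0)$. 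That is the argument for (i) and for Definition~\ref{d2.2}(ii); note also that (i) as stated (enlargement by $\varepsilon$) is stronger than Definition~\ref{d2.2}(ii) (enlargement by $\varepsilon 2^{-j}\le\varepsilon$), so once you have (i) the regularity condition follows. With this correction your outline matches the paper's cited construction.
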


If $k=0$, we sometimes use the notation $\Omega^R_{j,0}$ to replace $\Omega^R_{j,0,1}$.
Lemma \ref{l3.1}(g) is not stated explicitly in \cite{ss00}
but it follows immediately from the construction described there; see also \cite{KLSS0}.
Most transparent is the case $n=2$. Here is a picture.

\begin{center}
\includegraphics[height=9cm]{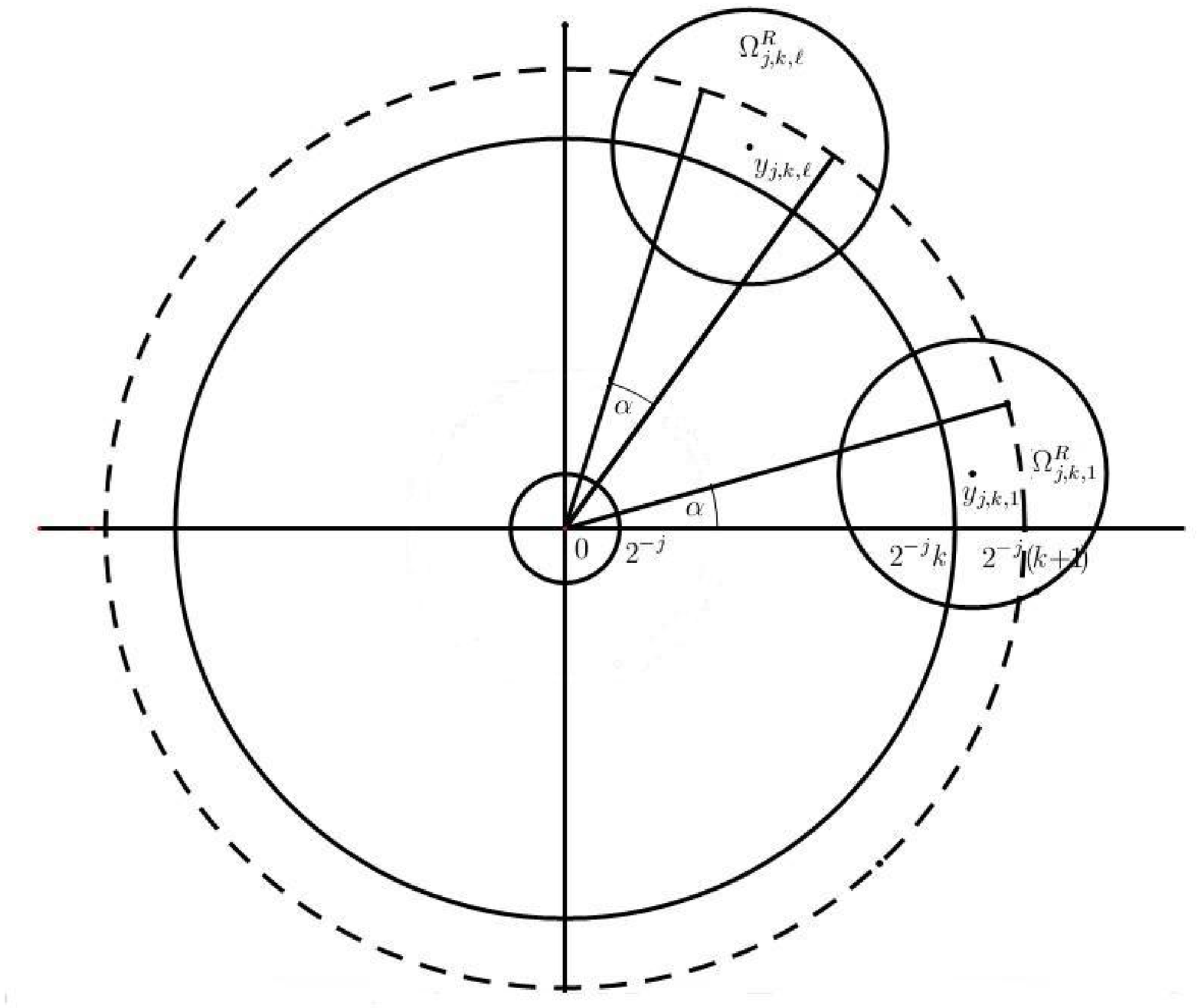}

\small{Fig.1\hs\hs  A piece of the covering $(\Omega^R_{j,k,\ell})_{j=0}^\fz$
in the case $n=2$}
\end{center}

\noindent
Here the angle $\alpha$ is taken to be $(2\pi)/(2k+1)$, $k \in\nn$.

Applying Theorem \ref{t2.1} and Lemma \ref{l3.1}, we obtain the atomic
decomposition of radial spaces.
We use the following abbreviation
\[
\omega (P,j,k) :=|\{\ell: \quad 1\le \ell \le C(n,k)\quad \mbox{and}
\quad \Omega^R_{j,k,\ell}\cap P\neq \emptyset \}|\, .
\]
Obviously, $0 \le \omega (P,j,0) \le 1$ for all $j$ and all dyadic cubes $P$.

\begin{theorem}\label{t3.1}
Let $n\ge 2$, $s\in\rr$, $\tau\in[0,\fz)$ and $p,\,q\in(0,\fz]$, integers $L,M$ satisfying
\[
L\ge \max\{\lfloor s+n\tau\rfloor+1,0\}
\qquad \mbox{and} \qquad
M\ge \max\{\lfloor \sigma_{p}-s\rfloor,-1\} \, ,
\]
and $(\Omega^R_j)_{j\in\nn_0}:=(\{\Omega^R_{j,k,\ell}\}_{k\in\nn_0,
\ell\in\{1,\ldots,C(n,k)\}})_{j\in\nn_0}$ be as in Lemma \ref{l3.1}.

{\rm (i)} The space $R\bt$ admits an atomic decomposition with
respect to $(\Omega^R_{j})_{j\in\nn_0}$.

{\rm (ii)} If $f\in R\bt$, then $f$ admits an atomic  decomposition
\begin{equation}\label{deco}
f = \sum_{j=0}^\infty t_{j,0}\, a_{j,0}+\sum_{j=0}^\infty \sum_{k=1}^\infty  t_{j,k}
\, \sum_{\ell=1}^{C(n,k)}\,  a_{j,k,\ell} \, ,
\end{equation}
where $a_{j,k,\ell}$ is an $(s,p)_{L,M}$-atom with respect to $\Omega^R_{j,k,\ell}$ for
all $j\in\nn$,  $a_{j,0}$ an $(s,p)_{L,M}$-atom with respect to $\Omega^R_{j,0}$ for
all $j\in\nn$, $a_{0,k,\ell}$ an $1_L$-atom with respect to $\Omega^R_{0,k,\ell}$,
$a_{0,0}$ an $1_L$-atom with respect to $\Omega^R_{0,0}$ and
\begin{equation}\label{deco2}
\sup_{P\in\mathcal{Q}}\frac1{|P|^\tau}
\left\{\sum_{j=\max\{j_P,0\}}^\fz
\left[\sum_{k=0}^\infty\, \omega (P,j,k)\,
|t_{j,k}|^p\right]^{\frac qp}\right\}^{\frac 1q}\le C\|f\|_{\bt} \, .
\end{equation}
Here $C$ is a positive constant independent of $f$.
\end{theorem}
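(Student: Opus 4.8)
The plan is to deduce Theorem \ref{t3.1} from the general atomic decomposition Theorem \ref{t2.1} by feeding in the specific regular sequence of coverings $(\Omega^R_j)_{j\in\nn_0}$ furnished by Lemma \ref{l3.1}, and then checking that the radial structure is respected. First I would observe that, by Lemma \ref{l3.1}, the sequence $(\Omega^R_j)_{j\in\nn_0}$ is a regular sequence of coverings of $\rn$ in the sense of Definition \ref{d2.2}: condition (i) there follows from Lemma \ref{l3.1}(b) together with the fact that $\rn = \bigcup_{k\in\nn_0} P_{j,k}$ for each fixed $j$; condition (iii) follows from Lemma \ref{l3.1}(c) (which gives $\diam \Omega^R_{j,k,\ell} = 12\cdot 2^{-j}$, hence $B_n = 12$) and from the fact that each $\Omega^R_{j,k,\ell}$ is a ball of that diameter, so $|\Omega^R_{j,k,\ell}| = \omega_n (6\cdot 2^{-j})^n = C_n 2^{-jn}$ with $C_n := 6^n \omega_n$; condition (ii) is exactly Lemma \ref{l3.1}(i). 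With the stated lower bounds on $L$ and $M$ in force, Theorem \ref{t2.1} then yields that $\bt$ admits an atomic decomposition with respect to $(\Omega^R_j)_{j\in\nn_0}$; this already contains the existence of a representation \eqref{deco} for every $f\in\bt$, and in particular for every $f\in R\bt$, together with the norm estimate \eqref{deco2} once the sequence space $b(p,q,\tau,\Omega^R)$ is rewritten in the $(j,k)$-indexed form. Here one uses that, by Lemma \ref{l3.1}(h), for fixed $j,k$ all the atoms $a_{j,k,\ell}$, $\ell\in\{1,\dots,C(n,k)\}$, are rotates of a single atom and can be assigned a common coefficient $t_{j,k}$, and that the inner sum $\sum_{\ell:\,\Omega^R_{j,\ell}\cap P\neq\emptyset}|t_{j,\ell}|^p$ over the combined index $\ell\leftrightarrow(k,\ell)$ collapses to $\sum_{k=0}^\infty \omega(P,j,k)|t_{j,k}|^p$ by the definition of $\omega(P,j,k)$. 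This proves part (ii), and the fact that the resulting quantity is an equivalent quasi-norm on $R\bt$ gives part (i).

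The one genuine point requiring an argument is that the decomposition can be chosen \emph{radial}, i.e.\ that when $f\in R\bt$ one may take a common coefficient $t_{j,k}$ for the whole orbit $\{\Omega^R_{j,k,\ell}:\ell=1,\dots,C(n,k)\}$ and take $a_{j,k,\ell}=(a_{j,k,1})^{U_{k,\ell}^{-1}}$ with the $U_{k,\ell}\in SO(\rn)$ from Lemma \ref{l3.1}(h). I would obtain this by an averaging (symmetrization) argument over the rotation group, exactly as in \cite[Section 3.2]{ss00}: start from an \emph{arbitrary} atomic representation $f=\sum_{j,\ell}t_{j,\ell}a_{j,\ell}$ of $f\in\bt$ supplied by Theorem \ref{t2.1}, and for each fixed $(j,k)$ replace the data on the orbit by their average under the finite-group-like action generated by the $U_{k,\ell}$ (more precisely, average over the subgroup permuting the balls $\{\Omega^R_{j,k,\ell}\}_\ell$, using that these balls are pairwise related by rotations and, by Lemma \ref{l3.1}(a),(c), all sit in the same shell at the same distance from the origin). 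Since $f$ is $SO(\rn)$-invariant, the averaged series still represents $f$; since each $T_U$ is bounded on $\bt$ and on the sequence space $b(p,q,\tau,\Omega^R)$ (the covering being itself rotation-compatible by Lemma \ref{l3.1}(e),(h)), the averaged coefficients still satisfy \eqref{deco2} up to a constant; and by construction the averaged atoms on each orbit are rotates of one another and the averaged coefficients are orbit-constant. A small technical point here is that the number $C(n,k)$ of balls in the $k$-th shell is finite (Lemma \ref{l3.1}(f)), so the average is a genuine finite average and no convergence issue arises; for $k=0$ there is a single ball ($C(n,0)=1$ by Lemma \ref{l3.1}(f)) and nothing has to be done.

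The main obstacle, and the step I would spend most care on, is precisely this symmetrization: one must make sure that averaging the \emph{atoms} (not just the coefficients) preserves the atom property. This is where Definition \ref{d2.1} is used carefully — an $(s,p)_{L,M}$-atom has a support condition, a derivative bound, and (if $M\ge0$) a moment condition, and all three are invariant under the orthogonal substitution $a\mapsto a^{U}$ because $U$ preserves the relevant balls $\frac r2\Omega^R_{j,k,\ell}$ and $\overline{r\Omega^R_{j,k,\ell}}$ (by Lemma \ref{l3.1}(e),(h) applied at the appropriate scale), preserves Lebesgue measure, and permutes the partial-derivative bounds among themselves so that $\sup_y|\partial^\alpha a^U(y)| \le \sup_y \max_{|\beta|=|\alpha|}|\partial^\beta a(y)|$ stays within the same bound $r^{s-|\alpha|-n/p}$. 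A convex average of atoms with a common center $\Omega^R_{j,k,\ell}$ is then again such an atom, by the triangle inequality in each of the three defining conditions, and the orbit-averaging described above is exactly such a convex combination after relabelling via the $U_{k,\ell}$. Once this is in place, assembling \eqref{deco} from the orbit-averaged pieces and reading off \eqref{deco2} from $\|\{t_{j,\ell}\}\|_{b(p,q,\tau,\Omega^R)}$ is routine bookkeeping, and the reverse direction (that any $f$ of the form \eqref{deco} with \eqref{deco2} lies in $R\bt$) is immediate from part (ii) of Theorem \ref{t2.1} together with the observation that a series of the special form \eqref{deco} is manifestly $SO(\rn)$-invariant, so it lies in the closed subspace $R\bt$.
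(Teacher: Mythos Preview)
Your argument for part (i) is fine and matches the paper. For part (ii), however, the symmetrization route you sketch is both more complicated than necessary and, as written, has a genuine gap. Lemma \ref{l3.1}(h) only asserts that for each $(k,\ell)$ there exists some $U_{k,\ell}\in SO(\rn)$ with $\Omega^R_{j,k,\ell}=U_{k,\ell}(\Omega^R_{j,k,1})$; it does \emph{not} claim that the family $\{U_{k,\ell}:\ell\}$ is a subgroup of $SO(\rn)$, nor that any subgroup of $SO(\rn)$ permutes the balls $\{\Omega^R_{j,k,\ell}\}_\ell$ among themselves. Your ``average over the subgroup permuting the balls'' is therefore not defined, and the hedge ``finite-group-like action'' does not repair this. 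Even if one could find a cyclic permuting group in each shell (plausible for $n=2$ from Fig.~1), these groups vary with $k$, so you cannot apply a single averaging operator to the whole atomic series and then invoke $f^U=f$. Averaging over all of $SO(\rn)$ with Haar measure would, on the other hand, destroy the covering structure, since a generic rotate of $\Omega^R_{j,k,\ell}$ is not one of the covering balls. Finally, your fallback of simply equalizing coefficients across an orbit and absorbing ratios into the atoms does not give \eqref{deco2}: the quantity $\omega(P,j,k)\max_\ell|t_{j,k,\ell}|^p$ is in general not controlled by $\sum_{\ell:\,\Omega^R_{j,k,\ell}\cap P\neq\emptyset}|t_{j,k,\ell}|^p$, because the maximum may be attained at some $\ell$ whose ball misses $P$.

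The paper sidesteps all of this by using not an arbitrary decomposition from Theorem \ref{t2.1} but the \emph{specific} one built in Step 1 of its proof (formulas \eqref{e2.3}--\eqref{e2.4}), with the generators $\Phi,\varphi$ chosen radial. Then $\Phi\ast f$ and $\varphi_j\ast f$ are themselves radial functions whenever $f$ is, and since the coefficient $t_{j,k,\ell}$ is defined as a constant multiple of $\sup_{y\in\Omega^R_{j,k,\ell}}|\varphi_j\ast f(y)|$, Lemma \ref{l3.1}(h) gives $t_{j,k,\ell}=t_{j,k,1}$ immediately: the supremum of a radial function over a ball equals its supremum over any rotate of that ball about the origin. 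No averaging is needed, and \eqref{deco2} then drops out of \eqref{s1} by rewriting the inner sum over the combined index as $\sum_{k}\omega(P,j,k)|t_{j,k}|^p$. Note also that the statement of the theorem does not require the atoms $a_{j,k,\ell}$ to be rotates of one another --- a property you spend your third paragraph justifying --- but only that the coefficients be $\ell$-independent.
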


\begin{proof}
Part (i) is an immediate consequence of Theorem \ref{t2.1}
and Lemma \ref{l3.1}.
Concerning (ii) we employ the particular decomposition constructed in
Step 1 of the proof of Theorem \ref{t2.1}.
Rewriting (\ref{e2.3}) we obtain
\[
t_{j,k,\ell}:= \left\{ \begin{array}{lll}
D(n,M)\,  \dsup_{y \in \Omega^R_{0,k,\ell} }\, | \Phi \ast f(y)|,
& \qquad & \mbox{if}\quad j=0\, ;\\
&&\\
E(n,M)\, 2^ {j(s-n/p)}\, \dsup_{y \in \Omega^R_{j,k,\ell} }\, | \varphi_j \ast f(y)|,
& \qquad & \mbox{if}\quad j \in \nn\, .
\end{array}\right.
\]
Since $\Phi\ast f$ and $\varphi_j\ast f$ are radial functions if $f$, $\Phi$
and $\vz$ are radial, we
conclude by using Lemma \ref{l3.1}(h) of our regular covering that
\[
t_{j,k,\ell} = t_{j,k,1} \qquad \mbox{for all}\quad j,k, \ell \, .
\]
This proves (ii) and hence finishes the proof of Theorem \ref{t3.1}.
\end{proof}

\begin{remark}
Epperson and Frazier \cite{ef95,ef96} independently developed a theory of radial
subspaces of Besov and Triebel-Lizorkin spaces. They preferred to work with atomic
decompositions, where the atoms themselves are radial. We did not follow their
treatment here but sometimes this different point of view has some advantages.
\end{remark}

%&&&&&&&&&&&&&&&&&&&&&&&&&&&&&&&&&&&&&&&&&&&&&&&&&&&&&&&&&&
%&&&&&&&&&&&&&&&&&&&&&&&&&&&&&&&&&&&&&&&&&&&&&&&&&&&&&&&&&&

\section{On the smoothness of radial functions outside the origin}
\label{smooth}

%&&&&&&&&&&&&&&&&&&&&&&&&&&&&&&&&&&&&&&&&&&&&&&&&&&&&&&&&&&
%&&&&&&&&&&&&&&&&&&&&&&&&&&&&&&&&&&&&&&&&&&&&&&&&&&&&&&&&&&

\hskip\parindent
To begin with we study the continuity of radial functions outside the origin.
As usual, we  call an element  $f$ of $\bt$
continuous if it has a continuous representative.

Before we start to investigate this problem we  explain how we  use
the atomic decomposition in (\ref{deco}).
This is also of some importance for all other proofs here.
The main feature is the local structure in the following sense.
Let $f$ be given by (\ref{deco}), i.\,e.,
\[
f = \sum_{j=0}^\infty t_{j,0}\, a_{j,0}+\sum_{j=0}^\infty \sum_{k=1}^\infty
t_{j,k} \, \sum_{\ell=1}^{C(n,k)}\,  a_{j,k,\ell} \, ,
\]
We fix $|x|\ge 1$.
Observe that for all $j\in\nn_0$ there exists $k_j\in \nn$ such that
\begin{equation}\label{e1}
k_j \, 2^{-j} \le |x| < (k_j + 1)\, 2^{-j} \, .
\end{equation}
Then, employing the restrictions on the supports of our atoms, we see that
$f(x)$ is given by
\[
\sum_{|r_j|\le N} \sum_{|t_j|\le N} \sum_{j=0}^\infty  \, t_{j,k_j+r_j}\,
a_{j,k_j+r_j, \ell_j + t_j}(x) \, ,
\]
where $N \in \nn$ is a universal number independent of $f$  and $x$.
Here $\ell_j$ depends on $x$.
To avoid technicalities
we simply deal in our investigations with the function
\begin{equation}\label{point1}
f^P (x) = \sum_{j=0}^\infty  \, s_{j,k_j}\,
a_{j,k_j, \ell_j}(x)
\end{equation}
instead of $f$ itself.
In case $0 < |x|<1$ these arguments have to be modified. Let
\begin{equation}\label{e2}
2^{-j_0} \le |x| <  2^{-j_0+1}
\end{equation}
for some $j_0 \in \nn$.
Then we  deal  with the function
\begin{equation}\label{point2}
f^P (x) := \sum_{j=0}^{j_0 -1}  \, s_{j,0}\, a_{j,0}(x)  + \sum_{j=j_0}^\infty  \, s_{j,k_j}\,
a_{j,k_j, \ell_j}(x) \, ,
\end{equation}
as a replacement of $f$ at
the point $x$. Here $k_j$ is defined as in (\ref{e1}).

\begin{theorem}\label{tc.1}
Let $n \ge 2$, $p\in(0,\fz)$ and $\tau \in [0,\fz)$.
Let either $s>1/p$ and $q\in(0,\fz]$, or $s=1/p$ and $q\in(0,1]$.
Every radial element $f $ in $\bt$
has a representative $\widetilde{f}$ which is continuous outside of the origin.
\end{theorem}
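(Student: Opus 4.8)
The plan is to use the radial atomic decomposition from Theorem~\ref{t3.1} together with the localization device introduced just above, reducing everything to estimating the local sum $f^P$ in \eqref{point1} (respectively \eqref{point2}) and showing this sum converges uniformly on compact subsets of $\rn\setminus\{0\}$. Fix a point $x_0\neq0$, say with $|x_0|\asymp 2^{-j_0}$ if $|x_0|<1$ or $|x_0|\ge1$, and work on a small ball around $x_0$ not meeting the origin. On such a ball only boundedly many atoms $a_{j,k_j,\ell_j}$ (with $|r_j|,|t_j|\le N$) are relevant at each scale $j$, so it suffices to bound $\sum_j |s_{j,k_j}|\,\|a_{j,k_j,\ell_j}\|_{L^\infty}$. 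Since each $a_{j,k_j,\ell_j}$ is an $(s,p)_{L,M}$-atom centered in a set of diameter $\asymp 2^{-j}$, we have $\|a_{j,k_j,\ell_j}\|_{L^\infty}\lesssim 2^{-j(s-n/p)}=2^{j(n/p-s)}$.

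The key step is then to show $\sum_{j} 2^{j(n/p-s)}\,|s_{j,k_j}| <\infty$, with a bound uniform for $x_0$ in a compact set. Here I would invoke the coefficient estimate \eqref{deco2}: taking the dyadic cube $P$ to be one of comparable size to the working ball around $x_0$ (so $j_P\asymp j_0$ for $|x_0|<1$, or $j_P=0$ for $|x_0|\ge1$), and noting that for each $j$ only the index $k=k_j$ contributes with $\omega(P,j,k_j)\ge 1$, \eqref{deco2} gives control of $\big(\sum_{j\ge j_P} 2^{jsq}|s_{j,k_j}|^p\cdot(\text{stuff})\big)^{1/q}$ — more precisely of the mixed $\ell^q(\ell^p)$ quasinorm of the relevant coefficients. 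I would convert this into the $\ell^1$-type bound on $\sum_j 2^{j(n/p-s)}|s_{j,k_j}|$ as follows: when $s>1/p$, the factor $2^{j(n/p-s)}$ decays geometrically relative to the $2^{js}$ weight built into \eqref{deco2} once one also uses that at scale $j$ the cube $P$ of size $\asymp|x_0|$ contains only $\omega(P,j,k_j)\gtrsim$ a single shell-piece, effectively a single term, so the inner $\ell^p$ sum is just $|s_{j,k_j}|^p$; a Hölder argument in $j$ (using $s-1/p>0$ to sum a geometric series in the exponent, and absorbing the $q$ vs $1$ discrepancy) then yields summability. When $s=1/p$ and $q\le1$, the weight $2^{j(n/p-s)}$ combined with the normalization in the $b(p,q,\tau,\Omega)$ norm exactly matches $2^{js}\cdot 2^{-jn/p}\cdot 2^{jn/p}$, and the condition $q\le1$ lets one pass from the $\ell^q$ quasinorm directly to an $\ell^1$ estimate without loss, giving $\sum_j 2^{j(n/p-s)}|s_{j,k_j}|\lesssim \|f\|_{\bt}$.

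Once uniform convergence on compact subsets of $\rn\setminus\{0\}$ is established, the limit function $\widetilde f$ is continuous there as a locally uniform limit of the continuous partial sums of the atomic decomposition; and since the atomic decomposition represents $f$ in $\cs'(\rn)$, the function $\widetilde f$ agrees with $f$ as a distribution, hence almost everywhere, so $\widetilde f$ is the desired representative. I expect the main obstacle to be the bookkeeping that turns the $\ell^q(\ell^p)$-type control from \eqref{deco2} — which a priori involves all shells $k$ at scale $j$ through the weights $\omega(P,j,k)$ — into the clean statement that, for a cube $P$ of size comparable to $|x_0|$, only the single shell $k=k_j$ matters; this requires carefully choosing $P$ at each relevant dyadic scale and keeping track of the constant $N$ from the localization in \eqref{point1}--\eqref{point2}, plus handling the $\tau$-supremum by restricting to one well-chosen $P$ per scale. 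The two regimes ($|x_0|\ge1$ versus $0<|x_0|<1$) are handled in parallel, the only difference being whether the low-frequency $\Phi$-atoms $a_{j,0}$ enter, and their contribution is estimated in exactly the same way using that $1_L$-atoms are bounded by $1$.
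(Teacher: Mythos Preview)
There is a genuine gap. The sequence norm in \eqref{deco2} carries \emph{no} weight $2^{jsq}$ --- the smoothness $s$ has been absorbed into the atoms --- so from \eqref{deco2} with your choice of $P$ and the lower bound $\omega(P,j,k_j)\ge 1$ you obtain only an $\ell^q$-bound on the bare coefficients $|t_{j,k_j}|$. But the atom estimate gives $\|a_{j,k_j,\ell_j}\|_{L^\infty}\lesssim 2^{j(n/p-s)}$, and for $s=1/p$ this is $2^{j(n-1)/p}$, which \emph{grows} in $j$ since $n\ge 2$. No H\"older argument converts an $\ell^q$-bound on $|t_{j,k_j}|$ into a bound on $\sum_j 2^{j(n-1)/p}|t_{j,k_j}|$; your sketch would close only when $s>n/p$, not in the claimed range $s\ge 1/p$. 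The remark that for $s=1/p$ ``the normalization \ldots\ exactly matches'' is based on a misreading of the $b(p,q,\tau,\Omega)$-norm.

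The missing idea is precisely what the radial covering is built for: for a dyadic cube $P$ of side $\asymp|x_0|$ containing the origin one has not merely $\omega(P,j,k_j)\ge 1$ but $\omega(P,j,k_j)\asymp C(n,k_j)\asymp k_j^{\,n-1}\asymp (2^{j}|x_0|)^{n-1}$, because the radial coefficient $t_{j,k_j}$ is shared by \emph{all} $\asymp k_j^{\,n-1}$ balls on the shell, and $P$ meets a fixed fraction of them. Writing $|t_{j,k_j}|=\omega(P,j,k_j)^{-1/p}\,(\omega(P,j,k_j)|t_{j,k_j}|^p)^{1/p}$ therefore gains a factor $2^{-j(n-1)/p}$, which exactly cancels the growth in $2^{j(n/p-s)}$ and leaves $2^{-j(s-1/p)}$; summing in $j$ then uses only the honest $\ell^1$-structure of $B^{1/p,\tau}_{p,1}$. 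This is the computation \eqref{conv1} in the paper. Your outline treats the inner $\ell^p$-sum as ``effectively a single term'', discarding the very multiplicity that makes the argument work.
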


\begin{proof}
{\em Step 1.} Let $\tau \ge 1/p$.
Then by \cite[Proposition 2.6]{ysy}, we know that
\[
\bt \hookrightarrow B^s_{\infty,\infty} (\rn) \qquad \mbox{if}\quad s>0\,.
\]
Hence, all functions in $\bt$ are continuous on $\rn$, not only the radial functions.

{\em Step 2.} Suppose $0 \le \tau < 1/p$.
By monotonicity of the spaces $\bt$ (see Remark \ref{grund}),  it is enough to deal with
$B^{\frac 1p, \, \tau}_{p,1} (\rn)$.

{\em Substep 2.1.} Let $|x|\ge 1$. We investigate the sequence
\[
S_N f^P (x) := \sum_{j=0}^N  \, s_{j,k_j}\,
a_{j,k_j, \ell_j}(x)\, , \qquad N\in\nn .
\]
Since our atoms are at least continuous, also $S_N f^P $ is continuous.
Furthermore, employing the normalization of our atoms
(see Definition  \ref{d2.1}), we find that
\begin{eqnarray*}
|S_N f^P(x)|\le \sum_{j=0}^N 2^{-j(s-n/p)}\, |t_{j,k_j}|=
\sum_{j=0}^N (k_j)^{\frac{1-n}p}\, 2^{-j(s-n/p)} \, (k_j^{n-1}\, |t_{j,k_j}|^p )^{1/p}.
\end{eqnarray*}
Let $P$ be the smallest dyadic cube containing all points
$ (k_j + 1)\, 2^{-j} \, x/|x|$, $j \in \nn_0$,  and $0$.
Hence
\[
|P|\asymp |x|^n \asymp (k_j+1)^n 2^{-jn} \qquad  \mbox{and} \qquad \omega(P,j,k_j) \asymp C(n,k_j) \asymp k_j^{n-1}\, ,
\]
since $k_j \in\nn$. We add a short explanation for the last relation.
Take $x$ such that all components are nonnegative.
Let $2^L < (k_j + 1)\, 2^{-j} \le 2^{L+1}$. Then $P = Q_{-L, 0}$ (see (\ref{dyadic})),
and contains the part of the shell (\ref{e1})
which is contained in
\[
\{y= (y_1, \ldots y_n): \: y_i \ge 0 \: \mbox{ for all }\:i\}\, .
\]
By the uniform distribution of the balls $\Omega^R_{j,k_j,\ell}$ on the shell this implies
that
\[
|\{ \ell : \quad P \cap \Omega^R_{j,k_j,\ell}\}| \asymp \frac{C(n,k_j)}{2^n}\, .
\]
Since we ignore the dependence on $n$ the claim follows.
This results in
\begin{eqnarray}\label{conv1}
|S_N f^P (x)|&\ls &
\sum_{j=0}^N (k_j)^{\frac{1-n}p}\, 2^{-j(s-n/p)} \lf[
\sum_{\ell=1}^\infty \omega (P,j,\ell ) \, |t_{j,\ell}|^p\r]^{1/p}\\
& \ls & |x|^{n\tau} \, \frac{|x|^{\frac{1-n}p}}{|P|^\tau}
\sum_{j=0}^\fz \, 2^{-j(s-1/p)} \, \lf[
\sum_{\ell=1}^\infty \omega (P,j,\ell ) \, |t_{j,\ell}|^p\r]^{1/p}
\nonumber\\
&\ls & |x|^{\frac{1-n}p+n\tau} \|f\|_{B^{\frac 1p, \, \tau}_{p,1} (\rn)}\, ,\nonumber
\end{eqnarray}
where the last inequality follows from (\ref{deco2}). Here the constant behind $\ls$ does not depend on $f$ and $x$.
 Practically the same set of inequalities as in (\ref{conv1}) yields
\[
 |S_{N_1+ N_2} f^P (x) - S_{N_1} f^P (x)| \ls
\frac{|x|^{\frac{1-n}p}}{|P|^\tau}
\sum_{j=N_1}^\fz \, 2^{-j(s-1/p)} \, \lf[
\sum_{\ell=1}^\infty \omega (P,j,\ell ) \, |t_{j,\ell}|^p\r]^{1/p} \ls \varepsilon,
\]
if $N_1$ is sufficiently large (depending on $\varepsilon$ and $|x|$) and $N_2 \in \nn$ is arbitrary. Let $b>1$.
Then this  implies uniform convergence of the sequence $S_N f^P$
on any shell $\{x \in \rn: \: 1 < |x|\le b\}$ to a continuous limit.
Coming back to the original situation we obtain the
continuity of $f$ on these regions,
which means on $|x|\ge 1$.

{\em Substep 2.2.} Let $|x|< 1$.
The arguments are similar with a few modifications.
There exists a natural number $j_0$ such that
$2^{-j_0}\le |x|<2^{-j_0+1}.$

We work with
\[
S_N f^P(x):= \sum_{j=0}^{j_0-1}\, t_{j,0}\, a_{j,0}(x)+
\sum_{j=j_0}^N t_{j,k_j}\, a _{j,k_j,\ell_j}(x)\, , \qquad N \ge j_0\, ,
\]
instead of $f$ itself. As in the previous substep we obtain
\[
|S_N f^P(x)|\le \sum_{j=0}^{j_0-1}2^{-j(s-n/p)}\, |t_{j,0}|+
\sum_{j=j_0}^N 2^{-j(s-n/p)}\, |t_{j,k_j}|.
\]
To estimate the second sum on the right-hand side we argue as in Substep 2.1. Indeed,
let $P$ be the smallest dyadic cube containing
$ (k_j + 1)\, 2^{-j} \, x/|x|$, $j \in \nn_0$,
$2^{-j_0 +1}x/|x|$ and $0$.
Then
\[
|P|\asymp  |x|^n \asymp  2^{-j_0n} \qquad \mbox{and}\qquad
\omega(P,j,k_j) \asymp C(n,k_j) \asymp k_j^{n-1}\, , \quad j \ge j_0\, .
\]
Thus, repeating the argument in Substep 2.1, we conclude that
\begin{eqnarray*}
\sum_{j=j_0}^N 2^{-j(s-n/p)}|t_{j,k_j}|\ls |x|^{\frac{1-n}p+n\tau} \|f\|_{B^{1/p,\tau}_{p,1}(\rn)}
\end{eqnarray*}
with a positive constant behind $\ls$ independent of $f$, $N$, $j_0$ and $x$.
Observe that for all $j\in\nn_0$,
$$|t_{j,0}|\ls 2^{-jn\tau}\|f\|_{B^{1/p,\tau}_{p,1}(\rn)}\, ;$$
see (\ref{deco2}). We choose $P$ as above and
use  $ \omega (P,j,0) = 1$.
By means of this inequality, we obtain
\[
\sum_{j=0}^{j_0-1}2^{-j(1/p-n/p)} \, |t_{j,0}|
\ls \|f\|_{B^{1/p,\tau}_{p,1}(\rn)}\sum_{j=0}^{j_0-1}2^{-j(1/p-n/p+n\tau)}\, .
\]
An easy calculation yields
\begin{eqnarray*}
\sum_{j=0}^{j_0-1}2^{-j(1/p-n/p+n\tau)}
&\ls &
\begin{cases} 1, \ \  &\tau\in (\frac{n-1}{np},\frac1p];\\
j_0,\  & \tau=\frac{n-1}{np};\\
2^{-j_0(1/p-n/p+n\tau)}, \ \ & \tau\in[0,\frac{n-1}{np})
 \end{cases}\\
& \asymp &
\begin{cases} 1, \ \  &\tau\in (\frac{n-1}{np},\frac1p];\\
1-\log_2|x|,\  & \tau=\frac{n-1}{np};\\
|x|^{\frac{1-n}p+n\tau}, \ \ & \tau\in[0,\frac{n-1}{np}) \, .
 \end{cases}
\end{eqnarray*}
Summarizing, we have found the estimate
\begin{equation}\label{conv3}
|S_N f^P(x)|\le w(|x|, \tau ) \|f\|_{B^{1/p,\tau}_{p,1}(\rn)}\, ,
\end{equation}
where
\[
w(|x|,\tau):= \begin{cases} 1, \ \  &\tau\in (\frac{n-1}{np},\frac1p];\\
1-\log_2|x|,\  & \tau=\frac{n-1}{np};\\
|x|^{\frac{1-n}p+n\tau}, \ \ & \tau\in[0,\frac{n-1}{np}) \, .
 \end{cases}
\]
Having established this estimate we may proceed as at the end of
Substep 2.1 to convert it into the continuity
of $f$ in all shells (\ref{e2}), $j \in \nn$.
The proof is complete.
\end{proof}

\begin{remark}
(i) For fixed $p$ and $\tau$ the largest space $\bt$ with the indicated property in Theorem \ref{tc.1}
is given by $B^{1/p, \tau}_{p,1} (\rn)$; see Remark \ref{grund} and Proposition \ref{grundp}.

(ii) As mentioned in the introduction this result has many forerunners in case $\tau =0$.
Let us mention Strauss \cite{St} ($H^1(\rn)=W^1_2 (\rn)= B^1_{2,2}(\rn) $), Lions
\cite{Li} ($W^1_p (\rn)$) and Sickel and Skrzypczak \cite{ss00}
(general case with $\tau =0$). A much more detailed analysis of the smoothness of radial
functions belonging to some Besov or Triebel-Lizorkin spaces has been given in
Sickel,  Skrzypczak and Vybiral \cite{ssv}.
In particular it is shown there that, in case $\tau =0$, the results are unimprovable within the scale
$B^s_{p,q} (\rn)$. More exactly, in  $B^{1/p}_{p,q} (\rn)$, $q>1$, there
exist radial and unbounded functions $f$ such that
\[
\supp f \subset \{x \in \rn: \quad a < |x|< b\}, \qquad 0 < a < b < \infty\, .
\]
Here $a$ and $b$ are at our disposal.
\end{remark}

%&&&&&&&&&&&&&&&&&&&&&&&&&&&&&&&&&&&&&&&&&&&&&&&&&&&&&&&&&&
%&&&&&&&&&&&&&&&&&&&&&&&&&&&&&&&&&&&&&&&&&&&&&&&&&&&&&&&&&&

\section{Decay near infinity and controlled unboundedness near the origin of radial functions}
\label{decay}

%&&&&&&&&&&&&&&&&&&&&&&&&&&&&&&&&&&&&&&&&&&&&&&&&&&&&&&&&&&
%&&&&&&&&&&&&&&&&&&&&&&&&&&&&&&&&&&&&&&&&&&&&&&&&&&&&&&&&&&

\hskip\parindent
The proof of Theorem \ref{tc.1} allows some immediate conclusions about the pointwise behaviour of radial functions.
Observe that, under the restrictions of Theorem \ref{tc.1}, our radial functions in
$\bt$ are continuous outside the origin,
hence pointwise inequalities make sense.

\begin{theorem}\label{t3.2}
Let $n\ge 2$ and $p\in(0,\fz)$.

{\rm (i)} Let  $\tau \in (\frac{n-1}{np}, \fz)$, $s+n(\tau-1/p)>0$ and $q\in(0,\fz]$.
Then there exists a positive constant $C$ such that for all
$f\in R\bt$ and all $x \in \rn$, it holds that
\begin{equation}
\label{eq-erg1c}
|f(x)|\le C \, \|f\|_{\bt}\,  .
\end{equation}

{\rm (ii)} Let $\tau \in [0,\frac{n-1}{np}]$. Let either $s>1/p$ and $q\in(0,\fz]$, or $s=1/p$ and $q\in(0,1]$.
Then there exists a positive constant $C$ such that for all
$f\in R\bt$ and all $|x|\ge1$, it holds that
\begin{equation}
\label{eq-erg1}
|f(x)|\le C \, \|f\|_{\bt}\,  |x|^{\frac{1-n}p+n\tau}.
\end{equation}

{\rm (iii)} Let $\tau \in [0,\frac{n-1}{np}]$.
Let either $s>1/p$ and $q\in(0,\fz]$, or $s=1/p$ and $q\in(0,1]$.
In addition,  assume that $s \le n(\frac 1p - \tau)$.
Then there exists a positive constant $C$ such that for all
$f\in R\bt$ and all $x$, $0<|x|<1$, it holds that
\begin{equation}\label{eq-erg1d}
|f(x)|\le C \, \|f\|_{\bt}\,  \begin{cases}
1-\log_2|x|,\  & \tau=\frac{n-1}{np};\\
|x|^{\frac{1-n}p+n\tau}, \ \ & \tau\in[0,\frac{n-1}{np}).
 \end{cases}
\end{equation}
\end{theorem}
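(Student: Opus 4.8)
The plan is to extract everything from the work already done in the proof of Theorem \ref{tc.1}. The key observation is that the partial sums $S_N f^P(x)$ considered there not only converge uniformly on shells, but they do so with the quantitative bounds \eqref{conv1} and \eqref{conv3}, in which the constant behind $\ls$ is independent of $f$ and of $x$. Since under the hypotheses of Theorem \ref{t3.2} we have $f\in R\bt$ with $f$ continuous outside the origin, passing to the limit $N\to\infty$ in those inequalities gives pointwise bounds on $|f(x)| = |f^P(x)|$ (recall that $f$ and $f^P$ agree at the fixed point $x$, up to the harmless finite sum over $|r_j|,|t_j|\le N$ of comparable terms, as explained around \eqref{point1}--\eqref{point2}). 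By monotonicity of the scale $\bt$ in $s$ and $q$ (Remark \ref{grund}), it again suffices to treat the endpoint space $B^{1/p,\tau}_{p,1}(\rn)$ in parts (ii) and (iii); the general $s,q$ case follows by embedding.

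For part (i), first I would note that if $\tau\ge 1/p$ then Step 1 of the proof of Theorem \ref{tc.1} already gives $\bt\hookrightarrow B^s_{\infty,\infty}(\rn)\hookrightarrow C_{ub}(\rn)$ when $s>0$, which yields \eqref{eq-erg1c} with a norm bound. For $\frac{n-1}{np}<\tau<1/p$ the hypothesis $s+n(\tau-1/p)>0$ together with Proposition \ref{grundp}(i) gives $\bt\hookrightarrow C_{ub}(\rn)$ directly, hence \eqref{eq-erg1c}. Alternatively, and more in the spirit of the atomic argument, one can observe that for $|x|<1$ the weight $w(|x|,\tau)$ from \eqref{conv3} equals the constant $1$ precisely in the regime $\tau\in(\frac{n-1}{np},\frac1p]$, while for $|x|\ge 1$ the bound \eqref{conv1} gives $|x|^{\frac{1-n}{p}+n\tau}$, which is bounded as $|x|\to\infty$ exactly when $\frac1p-n\tau\ge\frac{n-1}{np}$, i.e. $\tau\le\frac{n-1}{np}$ — so for the decay near infinity one instead needs to invoke the embedding into $B^s_{\infty,\infty}$ or into a Hölder--Zygmund space and the fact that radial functions there which are already known to be bounded cannot grow; cleanest is simply to cite Proposition \ref{grundp}(i) and Remark \ref{grund}(iv). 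I would present the embedding route as the main line for (i).

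For parts (ii) and (iii), the proof is essentially a re-reading of Substeps 2.1 and 2.2 of the proof of Theorem \ref{tc.1}. For (ii), with $|x|\ge 1$, inequality \eqref{conv1} already reads
\[
|S_N f^P(x)|\ls |x|^{\frac{1-n}{p}+n\tau}\,\|f\|_{B^{1/p,\tau}_{p,1}(\rn)}
\]
uniformly in $N$ and $x$; letting $N\to\infty$ and recalling $S_N f^P(x)\to f(x)$ gives \eqref{eq-erg1}. For (iii), with $0<|x|<1$, inequality \eqref{conv3} gives $|S_N f^P(x)|\le w(|x|,\tau)\,\|f\|_{B^{1/p,\tau}_{p,1}(\rn)}$, and in the regime $\tau\in[0,\frac{n-1}{np}]$ the weight $w(|x|,\tau)$ is exactly the right-hand side of \eqref{eq-erg1d}; passing to the limit yields the claim. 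The only genuinely new hypothesis in (iii) is $s\le n(\frac1p-\tau)$: I would need to check that the computation of $\sum_{j=0}^{j_0-1}2^{-j(1/p-n/p+n\tau)}$ in the proof of Theorem \ref{tc.1}, which was carried out for the endpoint $s=1/p$, still produces the stated weight when the exponent $1/p-n/p+n\tau$ is replaced by $s-n/p+n\tau$; the condition $s\le n(\frac1p-\tau)$ is precisely what guarantees this geometric sum is not summable and hence contributes the factor $|x|^{s-n/p+n\tau}$ (or the logarithm at the borderline), which after using $s\le n(\frac1p-\tau)$ is dominated by $|x|^{\frac{1-n}{p}+n\tau}$ — wait, that inequality goes the wrong way for $|x|<1$, so in fact one wants $s - n/p + n\tau \ge \frac{1-n}{p}+n\tau$, i.e. $s\ge \frac1p$, which holds, together with $s\le n(\frac1p-\tau)$ ensuring non-summability; I would verify these two inequalities pin down the claimed weight exactly.

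The main obstacle I anticipate is bookkeeping rather than conceptual: one must be careful that the estimates \eqref{conv1} and \eqref{conv3} were derived for the surrogate function $f^P$ built from a single chain of atoms, and that the reduction from $f$ to $f^P$ (the finite sum over $|r_j|,|t_j|\le N$ with universal $N$) only costs a constant factor, uniformly in $x$; this is already asserted in the text preceding Theorem \ref{tc.1}, so I would simply reference it. The second point requiring care is the endpoint index juggling in (iii) — making sure the hypothesis $s\le n(\frac1p-\tau)$ together with $s\ge 1/p$ (forced by the continuity hypothesis) selects the correct branch of the geometric-sum estimate and reproduces the weight in \eqref{eq-erg1d} with no loss. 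Everything else is a direct quotation of the displayed inequalities from the proof of Theorem \ref{tc.1}.
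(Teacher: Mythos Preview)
Your approach is essentially the same as the paper's: part (i) via Proposition~\ref{grundp}(i), parts (ii) and (iii) via the estimates \eqref{conv1} and \eqref{conv3} from the proof of Theorem~\ref{tc.1} combined with the monotonicity embeddings of Remark~\ref{grund}. That is exactly what the paper does, in three short steps.

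Your only misstep is the digression about the hypothesis $s\le n(\frac1p-\tau)$ in (iii). Having already (correctly) reduced to the endpoint space $B^{1/p,\tau}_{p,1}(\rn)$ by embedding, there is no need to redo the geometric-sum computation with exponent $s-n/p+n\tau$; the estimate \eqref{conv3} is stated for $s=1/p$ and that is all you use. The hypothesis $s\le n(\frac1p-\tau)$ plays \emph{no role} in the proof itself: as the paper notes in the remark following the theorem, it is imposed only because for $s>n(\frac1p-\tau)$ Proposition~\ref{grundp}(i) already gives global boundedness, so \eqref{eq-erg1d} would be a strictly weaker statement and hence uninteresting. Drop that paragraph entirely; your argument for (iii) is complete once you pass to the limit in \eqref{conv3}.
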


\begin{proof}
{\em Step 1.} Proof of (i). For the results from the continuous embedding of $\bt$ into a certain H\"older-Zygmund space
if  $s+n(\tau-1/p)>0$, see Proposition \ref{grundp}. Of course, the restriction to radial functions is superfluous here.

{\em Step 2.}
Part (ii) follows from the estimate (\ref{conv1}) in combination with the embeddings
for Besov-type spaces mentioned in Remark \ref{grund}.

{\em Step 3.}
Part (iii) follows from the estimate (\ref{conv3}), again in combination with the
embeddings for Besov-type spaces mentioned in Remark \ref{grund}.
\end{proof}

\begin{remark}
(i) Let us shortly comment on the behavior near infinity.
It is an easy exercise  in Fourier analysis to check that the function $f \equiv 1$
belongs to $\bt$ if and only if $0 < p \le \infty$, $\tau \ge 1/p$\, ,
$s \in \rr$ and $0 < q \le \infty \, .$
Since this function is radial it is immediate that functions belonging
to $R\bt$ need not  have decay near infinity.
With this respect the inequality (\ref{eq-erg1c}) is optimal in case $\tau \ge 1/p$.
Next we consider the particular case $\tau = \frac{n-1}{np}$. Then we have
\[
\frac{1-n}p+n\tau =0 \qquad \mbox{and}\qquad
s+n(\tau-1/p)>0 \qquad \Longleftrightarrow \qquad s> \frac 1p \, .
\]
Hence, the passage from Theorem \ref{t3.2}(i) (global boundedness and no decay)
to Theorem \ref{t3.2}(ii) (decay near infinity) is ``continuous''.

(ii) We comment on the behavior near the origin.
First of all, let us mention that in case $s> n(\frac 1p - \tau)$ we have global boundedness of all functions in
$\bt$. Hence, the assumption $s\le  n(\frac 1p - \tau)$ in
Theorem \ref{t3.2}(iii) is natural in our context.
In case $\tau =0$ and $s=n/p$ one knows even more about the
behavior of radial functions near the origin; see
\cite{ssv}.
For $\tau\in[0,\frac{n-1}{np})$ the bound near infinity is also the bound near the origin,
decay near infinity of order $|x|^{\frac{1-n}p+n\tau}$ becomes controlled unboundedness of order
$|x|^{\frac{1-n}p+n\tau}$ near the origin.
Again the limiting situation $\tau = \frac{n-1}{np}$ is of certain interest.
Then singularities of logarithmic order are possible.

(iii) For fixed $p$  and $\tau$ the largest space $\bt$ in (ii) and (iii)
of Theorem \ref{t3.2} is given by $B^{1/p, \tau}_{p,1} (\rn)$; see Remark \ref{grund}.

(iv)
There is a number of references with respect to the classical case $\tau =0$.
Following Strauss \cite{St} and Coleman, Glazer and Martin
\cite{CGM},
who had proved the inequality (\ref{eq-erg1}) for radial
functions belonging to
$W^1_2 (\rn)$, P.L.~Lions \cite{Li} gave the extension to all
first order Sobolev spaces $W^1_p(\rn)$.
The extension to Besov spaces and in particular, the existence of a
bound for $s$ for the  validity of these statements
have been found in Sickel and Skrzypczak \cite{ss00}.
More detailed investigations have been undertaken in
Sickel,  Skrzypczak and Vybiral \cite{ssv}.
Let us refer also to Cho and Ozawa \cite{CO}.
These authors  deal with $H^s(\rn)= B^s_{2,2}(\rn)$, $s>1/2$, using methods from Fourier analysis.
The advantage of this approach consists in its simplicity and the fact
that more information about the constant $C$ in  (\ref{eq-erg1}) is given.
In the framework of Morrey spaces those investigations seem to be new.

(v) Decay properties of functions under symmetry conditions have been
investigated at several places. We refer to Lions \cite{Li}, Kuzin and
Pohozaev \cite{KP}, Skrzypczak \cite{s02} and the references given there.
There also different types of symmetry constraints are investigated, e.g., block radial symmetry.
\end{remark}

Next we turn to the questions around sharpness in Theorem \ref{t3.2}.
Here we have some partial answers.

\begin{proposition}\label{p3.3}
Let $p\in(0,\fz)$ and $\tau \in [0,\fz)$.

{\rm(i)} Let $p>\frac{n-1}n$ and $\tau\in[0,\frac{n-1}{np}]$.
Assume that either $s<1/p$ and $q\in(0,\fz]$,
or $s=1/p$ and $q\in(1,\fz]$. Then for all $|x|\ge 1$, there exists a sequence
$\{f_N\}_{N\in\nn}$ of smooth and compactly supported radial functions, depending
on $x$, such that $\|f_N\|_{\bt}=1$ and
$\lim_{N\to\fz} |f_N(x)|=\fz$.

{\rm (ii)} Let $s<1/p$ and $q\in(0,\fz]$.
Then there exist a sequence $\{x_N\}_{N\in\nn}$ of points in $\rn$
and a corresponding sequence $\{f_N\}_{N\in\nn}$ of smooth and compactly
supported radial functions such that $x_N\to 0$, $\|f_N\|_{\bt}\le 1$ and
$|f_N(x_N)|\ge N|x_N|^{\frac{1-n}p-n\tau}$.
\end{proposition}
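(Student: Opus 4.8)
The plan is to prove both parts by explicit construction, with the radial atomic decomposition of Theorem~\ref{t3.1} as the main tool. For a target point $x$ with $|x|=R$, I would build $f_N$ as a \emph{finite} radial atomic sum $f_N=\sum_{j,k}t_{j,k}\sum_{\ell=1}^{C(n,k)}a_{j,k,\ell}$, with the coefficient array $\{t_{j,k}\}$ supported on a few shells $P_{j,k}$ of radii $k2^{-j}\asymp R$; then $f_N$ is smooth, compactly supported and radial, and by the synthesis direction of Theorem~\ref{t3.1} one has $\|f_N\|_{\bt}\ls\|\{t_{j,k}\}\|_{b(p,q,\tau,\Omega^R)}$ (for the radial array this sequence quasi-norm is the left-hand side of \eqref{deco2}). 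When $s>\sigma_p$ one may take $M=-1$, so the atoms $a_{j,k,\ell}$ can be chosen nonnegative of height $\asymp 2^{-j(s-n/p)}$ and peaking at $x$; the bounded overlap of the covering together with the disjointness of the shells then forces $f_N(x)\asymp\sum_j t_{j,k_j}\,2^{-j(s-n/p)}$, where $k_j$ is fixed by $x\in P_{j,k_j}$. So everything reduces to evaluating the sequence quasi-norm for a concrete array.

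The computational heart is this evaluation, in the range $0\le\tau\le\frac{n-1}{np}$. If $\{t_{j,k}\}$ has a single nonzero entry, equal to $1$ at $(j_0,k_0)$ with $k_02^{-j_0}\asymp R$, then, splitting $\sup_{P\in\mathcal{Q}}$ according as $\ell(P)\gtrsim R$, $2^{-j_0}\lesssim\ell(P)\lesssim R$, or $\ell(P)\lesssim 2^{-j_0}$, and using $\omega(P,j_0,k_0)\asymp\min\{(R2^{j_0})^{n-1},(\ell(P)2^{j_0})^{n-1}\}$ on the cubes meeting the shell, $C(n,k_0)\asymp(R2^{j_0})^{n-1}$, and the volume bounds of Lemma~\ref{l3.1}, one obtains $\|\{t_{j,k}\}\|_{b(p,q,\tau,\Omega^R)}\asymp 2^{j_0(n-1)/p}R^{(n-1)/p-n\tau}$; the hypothesis $\tau\le\frac{n-1}{np}$ is exactly what makes the relevant expression monotone in $\ell(P)$, so that the extremal cube has $\ell(P)\asymp R$. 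Likewise, for an array supported on shells of a common radius $R\ge1$ at scales $j=1,\dots,N$ with $t_{j,k_j}=2^{-j(n-1)/p}$, the factor $C(n,k_j)\asymp(R2^j)^{n-1}$ combines with $|t_{j,k_j}|^p=2^{-j(n-1)}$ to give $\asymp R^{n-1}$ at each scale, and one gets $\|\{t_{j,k}\}\|_{b(p,q,\tau,\Omega^R)}\asymp R^{(n-1)/p-n\tau}N^{1/q}$. Together with the value estimate, the first array gives $|f_N(x)|/\|f_N\|_{\bt}\gs 2^{j_0(1/p-s)}R^{n\tau-(n-1)/p}$, and the second (with $s=1/p$, so $\sum_j t_{j,k_j}2^{-j(s-n/p)}=N$) gives $|f_N(x)|/\|f_N\|_{\bt}\gs N^{1-1/q}R^{n\tau-(n-1)/p}$.

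For part (i), fix $R=|x|\ge1$. If $s<1/p$, use the single–shell array with scale $j_0=N$ and divide $f_N$ by its $\bt$–norm: then $\|f_N\|_{\bt}=1$ and $|f_N(x)|\gs 2^{N(1/p-s)}|x|^{n\tau-(n-1)/p}\to\infty$. If $s=1/p$ and $q>1$, use the stacked array, normalised: $\|f_N\|_{\bt}=1$ and $|f_N(x)|\gs N^{1-1/q}|x|^{n\tau-(n-1)/p}\to\infty$. For part (ii), take $x_N$ with $|x_N|=2^{-N}$ (so $x_N\to0$, $R=R_N=2^{-N}$) and use the single–shell array with a scale $j_N\ge N$ to be chosen; after normalisation $|f_N(x_N)|\gs 2^{j_N(1/p-s)}|x_N|^{(1-n)/p+n\tau}$, and since $|x_N|^{(1-n)/p+n\tau}=|x_N|^{(1-n)/p-n\tau}2^{-2Nn\tau}$, it suffices to take $j_N$ with $2^{j_N(1/p-s)}\ge N2^{2Nn\tau}$, e.g. $j_N:=\max\{N,\lceil(\log_2N+2Nn\tau)/(1/p-s)\rceil\}$, which is legitimate because $s<1/p$.

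The hard part is the evaluation in the second paragraph: identifying, up to constants, the dyadic cube $P$ that realises the supremum defining $\|\{t_{j,k}\}\|_{b(p,q,\tau,\Omega^R)}$. This is a genuine, if routine, case analysis over the three size regimes of $\ell(P)$, and it is where $\tau\le\frac{n-1}{np}$ is consumed; for $\tau>\frac{n-1}{np}$ the extremal cube flips to $\ell(P)\asymp 2^{-j_0}$ and one gets instead $\|\{t_{j,k}\}\|_{b(p,q,\tau,\Omega^R)}\asymp 2^{j_0n\tau}$, which in part (ii) still yields blow–up provided $n/p-s-n\tau>0$ — a condition that, together with $s<1/p$, exactly delimits the remaining relevant range — so a parallel computation settles that case. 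Two secondary points require care but no new ideas: when $s\le\sigma_p$ one cannot take $M=-1$, so in part (i) one first runs the construction for some $s_0\in(\sigma_p,1/p)$ — a non-empty interval precisely because $p>\frac{n-1}n$ — and then invokes $B^{s_0,\tau}_{p,q}(\rn)\hookrightarrow\bt$ for $s<s_0$, while in part (ii) one replaces the bump atoms by atoms with the requisite vanishing moments (the sequence–norm estimate is unchanged, and no cancellation between scales occurs since only one scale is active); finally one should note $f_N\in\bt$, so that normalisation is legitimate, which is clear since $f_N$ is a finite sum of smooth, compactly supported atoms.
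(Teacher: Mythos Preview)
Your overall strategy matches the paper's: build test functions concentrated on shells of radius $\asymp|x|$, bound their $\bt$-norm from above via the synthesis direction of Theorem~\ref{t3.1}, and show that the pointwise-value-to-norm ratio blows up. The sequence-norm computation you sketch (the case analysis over $\ell(P)$, with $\tau\le\frac{n-1}{np}$ forcing the extremal cube to have $\ell(P)\asymp R$) and the reduction to $s\in(\sigma_p,1/p)$ by monotonicity are exactly what the paper does.

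There is, however, a genuine gap in your construction: you assert that $f_N=\sum_{j,k}t_{j,k}\sum_{\ell=1}^{C(n,k)}a_{j,k,\ell}$ is radial, but this does not follow merely from the $\ell$-independence of the coefficients. The atoms $a_{j,k,\ell}$ are bumps on the balls $\Omega^R_{j,k,\ell}$ arranged around a shell; the sum $\sum_\ell a_{j,k,\ell}$ is invariant under the \emph{finite} family of rotations $\{U_{k,\ell}\}$ of Lemma~\ref{l3.1}(h), not under all of $SO(\rn)$. The paper avoids this by reversing the order: it \emph{starts} with a manifestly radial bump $\varphi_{j,r}(x):=\psi_0\bigl(|2^j|x|-2^r|\bigr)$, peaking on the sphere $|x|=2^{r-j}$, and only then uses a partition of unity $\{\psi_{j,k,\ell}\}$ subordinate to $\{\Omega^R_{j,k,\ell}\}$ to write $\varphi_{j,r}=\sum_{k,\ell}\psi_{j,k,\ell}\,\varphi_{j,r}$ as a finite atomic sum for the sole purpose of the norm estimate. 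With this modification your argument goes through unchanged; the sequence-norm calculation is identical, and the exact value $\varphi_{j,r}(x)=1$ at $|x|=2^{r-j}$ replaces your ``atoms peaking at $x$'' hypothesis. (For the stacked case $s=1/p$, $q>1$, the paper likewise takes $\sum_{j=1}^N\alpha_j\,\varphi_{j,j+4+r}$ with $\{\alpha_j\}\in\ell^q\setminus\ell^1$.)

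For part~(ii) there is a second, smaller divergence. The paper does not use a shell but the single \emph{central} ball $\Omega^R_{j,0}$, for which $\omega(P,j,0)\le1$ trivially and the sequence norm is $\asymp 2^{jn\tau}$ with no case analysis; this yields directly $|f_N(x_N)|\gtrsim|x_N|^{s-n/p+n\tau}$ at $|x_N|=2^{-j}$, and the case $s\le\sigma_p$ is again handled by monotonicity in $s$ rather than by introducing moment conditions. Your alternative via atoms with vanishing moments is shakier than you suggest: the remark that ``no cancellation between scales occurs since only one scale is active'' does not rule out cancellation \emph{within} the single scale once the atoms are forced to change sign, so the lower bound on $|f_N(x_N)|$ would need a separate argument.
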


\begin{proof}
{\em Step 1.} Proof of (i).
We follow  ideas used in \cite[pp.\,651-653]{ss00}.
Let $\psi\in C_0^\fz(\rn)$ such that $0\le \psi\le 1$, $\psi (x) = 1$ if
$|x|\le 12$ and $\psi (x) =0$ if  $|x|\ge 13$. Denote by
$y_{j,k,\ell}$ the center of the ball $\Omega^R_{j,k,\ell}$ and let
$$\wz{\psi}_{j,0}(\cdot):=\psi(2^j\cdot)\quad{\rm and}\quad
\wz\psi_{j,k,\ell}(\cdot):=\psi(2^j(\cdot-y_{j,k,\ell}))$$
for all $j\in\nn_0$, $k\in\nn$ and $\ell\in\{1,\ldots,C(n,k)\}$.
Then by Lemma \ref{l3.1}, we see that
$$1\le \wz{\psi}_{j,0}+\sum_{k\in\nn}\sum_{\ell=1}^{C(n,k)}
\wz\psi_{j,k,\ell}\le Mm_0,$$
where $m_0$ denotes the multiplicity of the sequence of coverings,
$\{\Omega^R_{j,k,\ell}\}_{\ell}$, and $M$ is a positive constant independent of $j$.
Define
$$\psi_{j,0}:=\frac{\wz{\psi}_{j,0}}{\wz{\psi}_{j,0}
+\sum_{m\in\nn}\sum_{r=1}^{C(n,m)}
\wz\psi_{j,m,r}}$$
and
$$\psi_{j,k,\ell}:=\frac{\wz{\psi}_{j,k,\ell}}{\wz{\psi}_{j,0}
+\sum_{m\in\nn}\sum_{r=1}^{C(n,m)}
\wz\psi_{j,m,r}}\, .$$
Then
$${\psi}_{j,0}
+\sum_{m\in\nn}\sum_{r=1}^{C(n,m)}
\psi_{j,m,r}\equiv1.$$
Let $\psi_0\in C^\fz([0,\fz))$ be such that $0\le \psi_0\le 1$, $\psi_0 (x)=1$
if $0 \le x \le 1$ and $\psi_0 (x) = 0$ if $x \ge 2$. Define
\[
\vz_{j,r}(x):= \psi_0\lf(\lf|2^j|x|-2^r\r|\r)\, , \qquad x\in\rn \, ,
\]
for all $j,r\in\zz$.
Obviously, these functions are radial,
\begin{equation}\label{e5}
\supp \varphi_{j,r} \subset \{x\in\rn: \quad \max (0, \,
2^{-j}(2^r-2)) \le |x| \le 2^{-j}(2+2^r)\} \, ,
\end{equation}
and
\begin{equation}\label{e6}
\varphi_{j,r} (x) = 1 \qquad \mbox{if} \quad |x|= 2^{r-j} \, .
\end{equation}
For this family of functions it has been  proved in \cite[p.\,652]{ss00} that
\[
\vz_{j,r}=\sum_{k=2^r-r_0}^{2^r+r_0}\, \sum_{\ell=1}^{C(n,k)} \psi_{j,k,\ell}\, \vz_{j,r}
\qquad \mbox{if} \qquad r\ge \max(j+4,5)\,,
\]
where  $r_0$ is independent of $j$ and $r$.
Assume that $n\, (\max\{0, \frac1p-1\}) < s <  1/p \, .$
In this region no moment conditions are needed for our atoms.
For some positive constant $C_1$, the functions
$C_1\, 2^{-j(s-n/p)}\, \psi_{j,k,\ell}\, \vz_{j,r}$
are $(s,p)_{L,-1}$-atoms
with respect to $\Omega^R_{j,k,\ell}$; see \cite[p.\,652]{ss00}.
Then by Theorem \ref{t3.1} we know that
\begin{eqnarray}\label{e7}
\|\vz_{j,r}\|_{\bt}
&=&
\lf\|\sum_{k=2^r-r_0}^{2^r+r_0}\sum_{\ell=1}^{C(n,k)} \, \psi_{j,k,\ell}\, \vz_{j,r}\r\|_{\bt}
\\
& \asymp & \sup_{\gfz{P\in\mathcal{Q}}{j_P\le j}}
\frac1{|P|^\tau}\lf[\sum_{k=2^r-r_0}^{2^r+r_0} \omega (P,j,k) \, 2^{j(s-n/p)p}\r]^{1/p} \, ;
\nonumber
\end{eqnarray}
see (\ref{deco2}). From our knowledge about the  support of $\vz_{j,r}$ (see (\ref{e5})),
it becomes clear that the supremum is realized by dyadic
cubes with side-length $\le  C\, (2^{r} + r_0) \, 2^{-j}$
for some positive constant $C$ independent of $r$ and $j$.
Hence, the supremum in the previous formula runs only over those dyadic cubes
$P$ such that
$$2^{-j}\le  \ell(P)\ls (2^r+r_0)2^{-j}\ls 2^{r-j} \, .$$
If $j_P=j-m$ for some $m \in \nn_0$, then for those cubes $m$ satisfies  $0\ls  m\ls r$.
Finally, the number of  balls $\Omega^R_{j,k,\ell}$ satisfying
$\Omega^R_{j,k,\ell}\cap P\neq \emptyset$ is at most a constant multiple of $2^{m(n-1)}$, i.\,e.,
\begin{equation}\label{e8}
\omega (P,j,k) \ls 2^{m(n-1)}\, .
\end{equation}
To prove this claim we proceed as follows.
Let $12\, P_{j,k}$ denote the the annulus
$$\lf\{x\in\rn:\ \lf(k-\frac{11}2\r) 2^{-j}\le |x|<\lf(k+\frac{13}2\r) 2^{-j}\r\}\hs{\rm if}\hs k>
\frac{11}2,$$
or the ball $B(0, (k+\frac{13}2) 2^{-j})$ if $k\le
\frac{11}2 $. From $\diam\, \Omega^R_{j,k,\ell} = 12 \cdot 2^{-j}$ and the position of the centers $y_{j,k,\ell}$
of the balls $\Omega^R_{j,k,\ell}$ in Lemma
\ref{l3.1}(a), we deduce that
$$\bigcup_{\ell=1}^{C(n,k)} \Omega^R_{j,k,\ell} \subset 12P_{j,k}.$$
On the other hand, notice that the volume of $P\cap (12P_{j,k})$ is at most a
constant multiple of $2^{-j}\times 2^{-j_P(n-1)}=2^{-jn}2^{m(n-1)}$ (which can be attained
in the case $\diam (P\cap 12P_{j,k})\asymp \sqrt{n}\ell(P)$). Since
$|\Omega^R_{j,k,\ell}| \asymp 2^{-jn}$ and
$\{\Omega^R_{j,k,\ell}\}_{\ell\in\{1,\ldots,C(n,k)\}}$ is of finite multiplicity,
it follows  that there exists at most a constant multiple of $2^{m(n-1)}$ balls in
$\{\Omega^R_{j,k,\ell}\}_{\ell\in\{1,\ldots,C(n,k)\}}$ such that
$$\Omega^R_{j,k,\ell}\cap P=\Omega^R_{j,k,\ell}\cap (P\cap 12P_{j,k})\neq \emptyset.$$
Thus, \eqref{e8} holds true.
In the picture below we draw a cube $P$, $\ell (P) = 2^{-j_P}>2^{-j}$ in a position,
where $\omega (P,j,k)$ becomes maximal within the family of dyadic cubes $Q$ of side-length $2^{-j_P}$.

\begin{center}
\includegraphics[height=8cm]{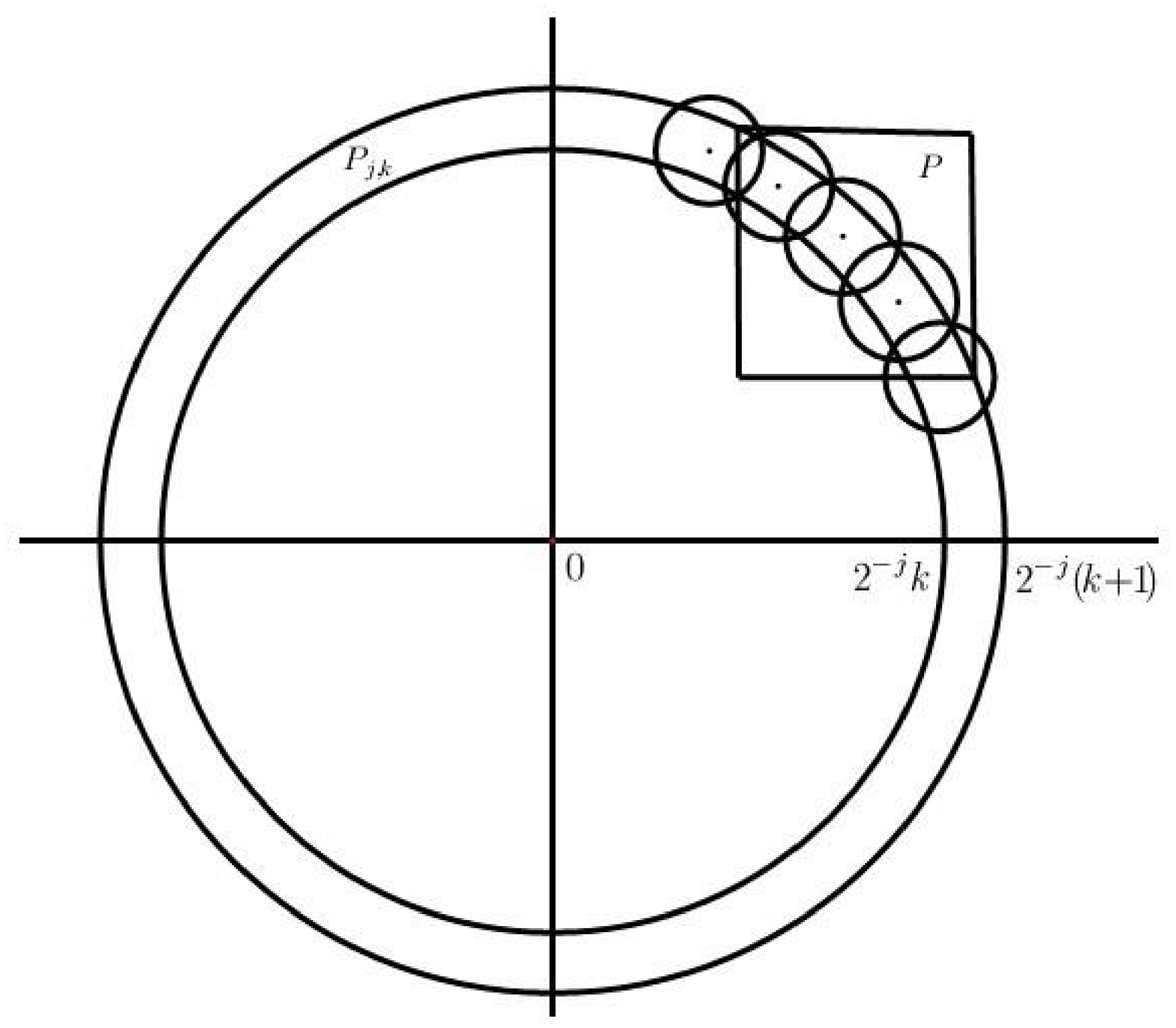}

\small{Fig.2\hs\hs A dyadic cube $P$ in a position with $\omega(P,j,k)$ large}
\end{center}

Therefore, employing $\tau \le  (n-1)/(np)$, (\ref{e7}) and (\ref{e8}), we obtain
\begin{eqnarray*}
\|\vz_{j,r}\|_{\bt}
\ls \sup_{0\le m\ls r} \, 2^{(j-m)n\tau} \, 2^{j(s-n/p)} \, 2^{m(n-1)/p}
\asymp  \,  2^{j(s-n/p+n\tau)} \, 2^{r(\frac{n-1}p-n\tau)}  \, .
\end{eqnarray*}
Now we turn to a new normalization
\[
\|\, 2^{-j(s-n/p+n\tau)} \, 2^{-r(\frac{n-1}p-n\tau)}\, \vz_{j,r}\|_{\bt}\le 1 \, ,
\]
but, thanks to (\ref{e6}), we find that
\begin{eqnarray*}
 2^{-j(s-n/p+n\tau)} \, 2^{-r(\frac{n-1}p-n\tau)}\, \vz_{j,r} ((2^{r-j},
 0,\, \, \ldots \, ,0)) & = & 2^{-j(s-n/p+n\tau)} \, 2^{-r(\frac{n-1}p-n\tau)}\\
&=& 2^{j(1/p-s)} \, 2^{(r-j)(-\frac{n-1}p+n\tau)} \, .
\end{eqnarray*}
Let  $|x|:=2^{t}$ for some $t\ge 5$. We apply the preceding
formulas with $t:=r-j$ fixed and $j$ tending to infinity.
This proves the desired result in the case of dyadic values of $|x|$.
The general case $|x|\ge 1$ follows from using the same arguments with some
simple modifications. Moreover, from the monotonicity of the quasi-norms
with respect to $s$ and $q$, we also obtain the result in the case  $s\le n(\max\{0,1/p-1\})$.

Now we consider the case that $s=1/p$ and $q\in(1,\fz]$. By the assumption on $p$
we know that $s>n(\max\{0,1/p-1\})$, which means that the atoms in
Theorem \ref{t3.1} do not need to satisfy moment conditions.
Let
$$\vz_{N,r}=\sum_{j=1}^{N}\, \az_j\, \vz_{j,j+4+r}\, , \qquad N\in\nn\, , \quad r\ge 0\, ,$$
 where $\{\az_j\}_j$ is a sequence of positive numbers such that $\{\az_j\}_j\in \ell^q$  but $\{\az_j\}_j\not\in \ell^1$.
Then, similar to the above argument in case $s<1/p$, we see that
\[
\|\vz_{N,r}\|_{\bt}
\asymp  \sup_{P\in\mathcal{Q}} \,
\frac1{|P|^\tau} \left\{ \sum_{j= \max(j_P,1)}^N \,  \az_j^q \,
\lf[ \sum_{k= 2^{j+4+r}-r_0}^{2^{j+4+r}+r_0} \, \omega (P,j,k) \, 2^{j(1-n)}\r]^{q/p}
\r\}^{1/q}\, .
\]
Arguing as above, in the supremum  only those dyadic cubes
$P$, such that $2^{-N}\le  \ell(P)\ls 2^{r} $, are of relevance; see (\ref{e5}). Thus,
$-r\ls j_P\le N$. By the same argument as used in \eqref{e8}, we see that,
for all $j\ge j_P$, $\omega (P,j,k) \ls \, 2^{(j-j_P)(n-1)}$.
Therefore, it follows from $\tau\in[0,\frac{n-1}{np}]$ that
\begin{eqnarray*}
\|\vz_{N,r}\|_{\bt}
&&\ls \sup_{\gfz{P\in\mathcal{Q}}{-r\ls j_P\le N}} \,
2^{j_P[n\tau-(n-1)/p]} \left\{ \sum_{j= \max(j_P,1)}^N \,  \az_j^q \,
\r\}^{1/q}
\ls  2^{r(\frac{n-1}p-n\tau)}\lf\{\sum_{j=1}^N \az_j^q\r\}^{1/q}.
\end{eqnarray*}

Next, observe that
\[
\vz_{N,r}((2^{r+4},0, \, \ldots \, ,0)) = \sum_{j=1}^N\az_j\to\fz  \qquad \mbox{as} \quad N\to\fz \, ;
\]
see (\ref{e6}). By taking $r$ fixed, but letting $N$ tend to infinity, this finishes
the proof for the case of dyadic values of $|x|$.
The general case $|x|\ge 1$ follows from some simple modifications.

{\em Step 2.} Proof of (ii).
Assume first that $n(\max\{0,\, 1/p-1\})<s<1/p$. Since $2^{-j(s-n/p)}\vz_{j,0}$
is a constant multiple of an $(s,p)_{L,-1}$-atom with respect to
$\Omega^R_{j,0}$, Theorem \ref{t3.1} yields
$$\|2^{-jn\tau}2^{-j(s-n/p)}\vz_{j,0}\|_{B^{s,\tau}_{p,q}(\rn)}\ls 1.$$
Considering the function $2^{-jn\tau}2^{-j(s-n/p)}\vz_{j,0}$ on the ring $|x|=2^{-j}$,
we know that
$$2^{-jn\tau}2^{-j(s-n/p)}|\vz_{j,0}(x)|=|x|^{s-n/p+n\tau}\ge N |x|^{\frac{1-n}p+n\tau}$$
when $|x|$ is small enough. The general case $s<1/p$ follows from a monotonicity
argument. This finishes the proof of Proposition \ref{p3.3}.
\end{proof}

\begin{remark}
(i) Combining Proposition \ref{p3.3} and Theorem \ref{t3.2}(ii), we know that
the inequality (\ref{eq-erg1}) can not be true if either $s<1/p$
and $q$ arbitrary, or $s=1/p$ and $1 < q \le \infty$.

(ii) Proposition \ref{p3.3}(ii) in combination with Theorem \ref{t3.2}(iii)
yields  that in case $s<1/p$ the inequality (\ref{eq-erg1d}) can not be true.

(iii) In case $\tau =0$ a much more detailed investigation of the sharpness of the results
has been undertaken in \cite{ssv}.
However, the approach used there is based on a characterization of the trace spaces of
$RB^s_{p,q}(\rn)$. That would be of certain interest also in the present situation.
\end{remark}

\subsection*{Smoothness outside the origin, decay near infinity and controlled unboundedness of radial functions
belonging to Sobolev-Morrey spaces}

\hskip\parindent
For the convenience of the reader we formulate the consequences of
Theorem \ref{tc.1}, Theorem \ref{t3.2}
and Proposition \ref{p3.3} in case of Sobolev-Morrey spaces.
Applying the embedding
\begin{equation}\label{ebd}
W^m \cm^u_p (\rn) \hookrightarrow   \cn^{m}_{u, p,\infty} (\rn)
= B^{m,\tau}_{p,\infty} (\rn)\, , \qquad \tau = \frac 1p - \frac 1u\, ,
\end{equation}
and Theorem \ref{tc.1}, we have the following result.

\begin{corollary}\label{cc.1}
Let $n \ge 2$, $1 < p \le u < \fz$ and $m \in \nn$.
Every radial element $f $ in $W^m \cm_p^u (\rn)$
has a representative $\widetilde{f}$ which is continuous outside of the origin.
\end{corollary}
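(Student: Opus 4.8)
The plan is to obtain Corollary \ref{cc.1} as a direct consequence of Theorem \ref{tc.1} via the embedding \eqref{ebd}. The starting point is Lemma \ref{emb1}(iii): since $m\in\nn\subset\nn_0$ and $1<p\le u<\fz$, it gives the continuous embedding $W^m\cm^u_p(\rn)\hookrightarrow B^{m,\tau}_{p,\fz}(\rn)$ with $\tau:=\frac1p-\frac1u$, as recorded in \eqref{ebd}. All spaces involved are subspaces of $\cs'(\rn)$ and this embedding is realized by the identity map, so a radial $f\in W^m\cm^u_p(\rn)$, i.e.\ an $f$ invariant with respect to $SO(\rn)$ in the sense of Definition \ref{d3.1}, is the very same distribution as a radial element of $B^{m,\tau}_{p,\fz}(\rn)$; in particular $f\in RB^{m,\tau}_{p,\fz}(\rn)$.

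The second step is simply to check that the parameters $s=m$, $p$, $q=\fz$, $\tau=\frac1p-\frac1u$ satisfy the hypotheses of Theorem \ref{tc.1}. We have $n\ge2$ by assumption, $p\in(1,\fz)\subset(0,\fz)$, and $0\le\tau=\frac1p-\frac1u<\frac1p<\fz$ because $p\le u<\fz$, so $\tau\in[0,\fz)$. Moreover $m\ge1$ together with $p>1$ yields $s=m\ge1>\frac1p$, placing us in the first alternative of Theorem \ref{tc.1} with $q=\fz\in(0,\fz]$. Theorem \ref{tc.1} then furnishes a representative $\widetilde f$ of $f$, viewed as an element of $B^{m,\tau}_{p,\fz}(\rn)$, that is continuous on $\rn\setminus\{0\}$; since the equivalence class of $f$ in $W^m\cm^u_p(\rn)$ coincides with its equivalence class in $B^{m,\tau}_{p,\fz}(\rn)$, the function $\widetilde f$ is the desired continuous-outside-the-origin representative.

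I do not expect a genuine obstacle here: the statement is a formal corollary, and all the work has already been done inside Theorem \ref{tc.1} (and, for the embedding, in Lemma \ref{emb1}(iii)). The one point worth flagging in the write-up is that the hypothesis $p>1$ (rather than merely $p\ge1$) is precisely what guarantees $s=m>\frac1p$ and hence access to Theorem \ref{tc.1}; at the endpoint $p=1$, $m=1$ one would be forced to the case $s=\frac1p$ with $q=\fz\notin(0,1]$, which Theorem \ref{tc.1} does not cover. It is also worth noting that since $\tau=\frac1p-\frac1u<\frac1p$ holds here throughout, we are genuinely in Step~2 of the proof of Theorem \ref{tc.1}, so ``continuous outside the origin'' — rather than ``continuous on $\rn$'' — is the sharp conclusion one reads off by this route.
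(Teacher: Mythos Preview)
Your proof is correct and follows exactly the paper's approach: apply the embedding \eqref{ebd} (which is Lemma~\ref{emb1}(iii) combined with (ii)) to land in $B^{m,\tau}_{p,\infty}(\rn)$ with $\tau=\frac1p-\frac1u$, then invoke Theorem~\ref{tc.1} after checking $s=m>1/p$. Your additional remarks on why $p>1$ is needed and on $\tau<1/p$ are accurate and helpful elaborations, but the core argument is identical to the paper's one-line justification preceding the corollary.
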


The next result follows from \eqref{ebd} and Theorem \ref{t3.2}.

\begin{corollary}\label{c3.2}
Let $n\ge 2$, $1 <  p \le u < \fz$ and $m \in \nn$.

{\rm (i)} Let $u/p \le n$.
Then there exists a positive constant $C$ such that for all
$f\in RW^m\cm_p^u (\rn)$ and all $|x|\ge 1$, it holds that
\begin{equation}
\label{eq-erg1f}
|f(x)|\le C \, \|f\|_{W^m\cm_p^u (\rn)}\,  |x|^{\frac{1}p - \frac nu}.
\end{equation}

{\rm (ii)} Let $u/p \le n$.
Then there exists a positive constant $C$ such that for all
$f\in RW^m \cm_p^u (\rn)$ and all $x$ with $0<|x|<1$, it holds that
\begin{equation}\label{eq-erg1g}
|f(x)|\le C \, \|f\|_{W^m\cm_p^u (\rn)}\,  \begin{cases}
1-\log_2|x|,\  & \frac 1p=\frac nu;\\
|x|^{\frac{1}p-\frac nu}, \ \ &  \frac 1p<\frac nu.
 \end{cases}
\end{equation}
\end{corollary}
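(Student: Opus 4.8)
The plan is to obtain Corollary \ref{c3.2} as a direct consequence of the embedding \eqref{ebd} and Theorem \ref{t3.2}. Writing $\tau:=\frac1p-\frac1u$, every radial $f\in W^m\cm^u_p(\rn)$ — which is continuous outside the origin by Corollary \ref{cc.1} — may be viewed via \eqref{ebd} as a radial element of $B^{m,\tau}_{p,\fz}(\rn)$, so the whole task reduces to checking that the parameters $s=m$, $q=\fz$, $\tau=\frac1p-\frac1u$ meet the hypotheses of Theorem \ref{t3.2}(ii) and (iii), and to matching the exponents.

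First I would carry out the parameter check. Since $p>1$ and $m\in\nn$ we have $s=m\ge1>\frac1p$, so the smoothness hypothesis ``$s>1/p$ and $q\in(0,\fz]$'' of Theorem \ref{t3.2} holds for $q=\fz$. Moreover $\tau=\frac1p-\frac1u\ge0$ because $u\ge p$, and
\[
\tau=\frac1p-\frac1u\le\frac{n-1}{np}=\frac1p-\frac1{np}
\quad\Longleftrightarrow\quad\frac1u\ge\frac1{np}
\quad\Longleftrightarrow\quad\frac up\le n\,,
\]
so the assumption $u/p\le n$ is exactly $\tau\in[0,\frac{n-1}{np}]$. Hence Theorem \ref{t3.2}(ii) applies, and Theorem \ref{t3.2}(iii) applies as soon as its additional requirement $s\le n(\frac1p-\tau)$ is met.

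For part (i) I would then simply quote Theorem \ref{t3.2}(ii): for $|x|\ge1$ it yields $|f(x)|\le C\,\|f\|_{B^{m,\tau}_{p,\fz}(\rn)}\,|x|^{\frac{1-n}p+n\tau}$, and substituting $\tau=\frac1p-\frac1u$ gives $\frac{1-n}p+n\tau=\frac1p-\frac nu$; together with \eqref{ebd} this is \eqref{eq-erg1f}.

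For part (ii) the only delicate point is that Theorem \ref{t3.2}(iii) carries the extra restriction $s\le n(\frac1p-\tau)$, which here reads $m\le n(\frac1p-\tau)=\frac nu$ and is \emph{not} among the hypotheses of the corollary; I would therefore distinguish two cases. If $m\le\frac nu$, Theorem \ref{t3.2}(iii) applies directly, and its two alternatives $\tau=\frac{n-1}{np}$ and $\tau\in[0,\frac{n-1}{np})$ correspond, via $\tau=\frac1p-\frac1u$, exactly to $\frac1p=\frac nu$ and $\frac1p<\frac nu$ in \eqref{eq-erg1g}. If instead $m>\frac nu$, then $\frac nu<m$ and \eqref{morrey2} gives $W^m\cm^u_p(\rn)\hookrightarrow C_{ub}(\rn)$, so $|f(x)|\le C\,\|f\|_{W^m\cm^u_p(\rn)}$; since $u/p\le n$ forces $\frac1p-\frac nu\le0$, we have $|x|^{\frac1p-\frac nu}\ge1$ and $1-\log_2|x|\ge1$ for $0<|x|\le1$, so this constant bound is already of the form asserted in \eqref{eq-erg1g}. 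The main obstacle is thus nothing more than this bookkeeping — reconciling the coarser hypothesis $u/p\le n$ with the sharper condition in Theorem \ref{t3.2}(iii) by splitting off the case $m>n/u$; no new estimate is required, everything resting on \eqref{ebd}, \eqref{morrey2} and Theorem \ref{t3.2}.
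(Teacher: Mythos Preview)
Your proposal is correct and follows exactly the route the paper indicates: the paper's entire proof is the single sentence ``The next result follows from \eqref{ebd} and Theorem \ref{t3.2},'' and you have carried out precisely that reduction, checking that $\tau=\frac1p-\frac1u\in[0,\frac{n-1}{np}]$ is equivalent to $u/p\le n$ and that $s=m>1/p$.

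One point worth noting: you are actually more careful than the paper. The extra hypothesis $s\le n(\frac1p-\tau)$ in Theorem \ref{t3.2}(iii), which becomes $m\le n/u$, is \emph{not} assumed in Corollary \ref{c3.2}(ii), and the paper does not comment on this gap. Your case split --- invoking \eqref{morrey2} when $m>n/u$ to get a uniform bound, and then absorbing that constant into the right-hand side of \eqref{eq-erg1g} via $|x|^{1/p-n/u}\ge 1$ and $1-\log_2|x|\ge 1$ for $0<|x|<1$ --- is the right way to close this, and it is a detail the paper leaves implicit.
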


\begin{remark}
Let us mention that because of the embedding (\ref{morrey2})
only those situations are of interest where $n\ge u$ holds.
Of course, part (i) of Corollary \ref{c3.2} represents Theorem
\ref{t3.2b} proved in a totally different way.
\end{remark}

\section{Appendix - Proof of Theorem \ref{t2.1}}

\hskip\parindent
{\em Step 1.}
Recall that $B^{s,\tau}_{p,q}(\rn)=B^{s+n(\tau-1/p)}_{\fz,\fz}(\rn)$
if either $q\in(0,\fz)$ and $\tau\in(1/p,\fz)$, or
$q=\fz$ and $\tau\in[1/p,\fz)$ (see \cite{yy02}).
The desired conclusion in the above case and in case $\tau =0$ has been given in
\cite[Proposition 1]{ss00}.

{\em Step 2.}
Next we assume that either $\tau\in(0,1/p)$ and $q \in (0,\fz]$, or $\tau=1/p$ and $q\in(0,\fz)$.

{\em Substep 2.1.} Let $f \in \bt$ be given. We show
 the existence of an appropriate atomic decomposition.
Let $\Psi$, $\psi\in\cs(\rn)$ be such that
\[|\widehat{\Psi}(\xi)|>0\qquad  \mbox{if} \quad |\xi|\le 2 \qquad
\mbox{and} \qquad |\widehat{\psi}(\xi)|>0\quad \mbox{if} \quad 1/2\le|\xi|\le 2\, .
\]
Then there exist $\Phi,\,\vz\in\cs(\rn)$ and $\delta\in(0,\fz)$ such that
\[
\supp\widehat{\Phi} \subset \{\xi\in\rn:\ |\xi|\le2\} \qquad \mbox{and}
\qquad  |\widehat{\Phi}(\xi)|>0\quad \mbox{if} \quad  |\xi|\le \delta\, ,
\]
 $\vz$ satisfies \eqref{e1.1}, and
$$\widehat{\Psi}(\xi)\overline{\widehat{\Phi}(\xi)}+\sum_{j\in\nn} \widehat{\psi}(2^{-j}\xi)
\overline{\widehat{\vz}(2^{-j}\xi)}=1$$
for all $\xi\in\rn$; see Frazier and Jawerth \cite{fj85,fj90}.
Let $A_n$ be the same as in Remark \ref{Konstanten}. In addition we may assume that
\[
\supp \Psi \, , \quad \supp \psi \subset \{x \in \rn: \quad |x|\le A_n/2\}
\]
and
\[
\int_\rn x^\gamma \psi (x) \, dx =0 \qquad \mbox{for all} \quad |\gamma|\le M\, ;
\]
see \cite[Proposition~1]{ss00}.
Then, by the Calder\'on reproducing formula, we see that
\begin{eqnarray*}
f(x) = \int_{\rn}\Psi(x-y)\,  \wz\Phi\ast f(y)\,dy+\sum_{j\in\nn}
\int_{\rn} \psi_j(x-y) \, \wz\vz_j\ast f(y)\,dy
\end{eqnarray*}
in $\cs'(\rn)$, where $\wz\Phi(\cdot)=\oz{\Phi(-\cdot)}$
and $\wz\vz_j(\cdot)=\oz{\vz_j(-\cdot)}$.
For all $j,\,\ell\in\nn_0$, we define
\begin{equation}\label{e2.3}
t_{j,\ell}:=
\begin{cases} D(n,M)\dsup_{y\in\Omega_{0,\ell}}|\wz\Phi\ast f(y)|, & j=0;
\\
E(n,M) \, 2^{j(s-n/p)}\dsup_{y\in\Omega_{j,\ell}}|\wz\vz_j\ast f(y)|,\quad &j\in\nn,
\end{cases}
\end{equation}
where
\begin{eqnarray*}
D(n,M):= B^n_nw_n\left[\max_{|\az|\le L} \|D^\az \Psi\|_{C(\rn)}\r]
\end{eqnarray*}
and
\begin{eqnarray*}
E(n,M):=&&B^n_nw_n\left[\max_{|\az|\le L} \|D^\az \psi\|_{C(\rn)}\r] \\
&&\times\max_{|\az|\le L}\left\{A_n^{-s+n/p+|\az|}, B_n^{-s+n/p+|\az|}\r\}
\max\lf\{1,w_n(3/2)^{M+n+1}\sum_{|\gz|=M+1}\frac1{\gz!}\r\}.
\end{eqnarray*}
As in \cite[Proposition~1]{ss00}, for all $j\in\nn_0$, we let
$\Omega^*_{j,0}:=\Omega_{j,0}$ and
$$\Omega^*_{j,\ell}:=\Omega_{j,\ell}\setminus \left(
\bigcup_{m=0}^{\ell-1}\Omega_{j,m}\right)\quad \mathrm{if}\quad \ell\in\nn.$$
Then $\{\Omega^*_{j,\ell}\}_{\ell\in\nn_0}$ is a family of pairwise disjoint sets
and satisfies that $\cup_{\ell\in\nn_0} \Omega^*_{j,\ell}=\rn$ for all
$j\in\nn_0.$
When $t_{j,\ell}\neq 0$ we let
\begin{equation}\label{e2.4}
a_{j,\ell}(x):=
\begin{cases} \dfrac1{t_{0,\ell}}\int_{\Omega^*_{0,\ell}}\Psi(x-y)\, \wz\Phi\ast f(y)\,dy, & j=0;\\
&\\
\dfrac{1}{t_{j,\ell}}\int_{\Omega^*_{j,\ell}} \psi_j(x-y)\, \wz\vz_j\ast f(y)\,dy,\quad &j\in\nn.
\end{cases}
\end{equation}
When $t_{j,\ell}=0$ we define $a_{j,\ell}(x)\equiv0$.
Then, by the argument used in the proof of \cite[Proposition~1]{ss00},
we know that  $a_{0,\ell}$ is an $1_L$-atom centered at the set $\Omega_{0,\ell}$,
$a_{j,\ell}$ with $j\in\nn$ an $(s,p)_{L,M}$-atom centered at the set $\Omega_{j,\ell}$
and $f=\sum_{j=0}^\fz\sum_{\ell=0}^\fz t_{j,\ell}\,a_{j,\ell}$
in $\cs'(\rn)$.
Now we  prove that
\begin{equation}\label{s1}
\|\{t_{j,\ell}\}_{j,\ell\in\nn_0}\|_{b(p,q,\tau,\Omega)}\ls \|f\|_{\bt} \, .
\end{equation}

By definition of the $t_{j,\ell}$, we find that
\begin{eqnarray*}
&&\|\{t_{j,\ell}\}_{j,\ell\in\nn_0}\|_{b(p,q,\tau,\Omega)}\\
&&\hs\asymp\,  \sup_{P\in\mathcal{Q}}\frac1{|P|^\tau}
\left\{\sum_{j=\max\{j_P,0\}}^\fz 2^{j(s-n/p)q}
\left[\sum_{\gfz{\ell\in\nn_0}{\Omega_{j,\ell}\cap P\neq \emptyset}}
\sup_{y\in \Omega_{j,\ell}}|\wz\vz_j\ast f(y)|^p\right]^{q/p}\right\}^{1/q}.
\end{eqnarray*}
Similar to the argument used in the proof of \cite[(2.11)]{fj85},
we conclude that
\begin{eqnarray*}
\sup_{y\in\Omega_{j,\ell}}|\wz\vz_j\ast f(y)|^p
\ls 2^{jn}\sum_{k\in\zz^n}(1+|k|)^{-N}\int_{\Omega_{j,\ell}}
|\wz\vz_j\ast f (y+2^{-j}k)|^p\,dy \, ,
\end{eqnarray*}
where $N$ is at our disposal.
On the other hand, since $\diam\Omega_{j,\ell}\le B_n2^{-j}$, there exists a constant $d\in[1,\fz)$ such that, for all
$j\ge j_P$,
$$\bigcup_{\gfz{\ell\in\nn_0}{\Omega_{j,\ell}\cap P\neq \emptyset}} \Omega_{j,\ell}
\subset dP.$$
Denote by $y_P$ the center of $P$ and by $B (y,R)$
the ball with radius $R$ and center in $y$. It follows that
\begin{eqnarray*}
&&\|\{t_{j,\ell}\}_{j,\ell\in\nn_0}\|_{b(p,q,\tau,\Omega)}\\
&&\hs\ls\sup_{P\in\mathcal{Q}}\frac1{|P|^\tau}
\left\{\sum_{j=\max\{j_P,0\}}^\fz 2^{jsq}
\left[\sum_{\gfz{\ell\in\nn_0}{\Omega_{j,\ell}\cap P\neq \emptyset}}\sum_{k\in\zz^n}\frac{\int_{\Omega_{j,\ell}}
|\wz\vz_j\ast  f (y+2^{-j}k)|^p\,dy}{(1+|k|)^{N}}
\right]^{q/p}\right\}^{1/q}\\
&&\hs\ls\sup_{P\in\mathcal{Q}}\frac1{|P|^\tau}
\left\{\sum_{j=\max\{j_P,0\}}^\fz 2^{jsq}
\left[\sum_{k\in\zz^n} \frac{\int_{dP}|\wz\vz_j\ast f(y+2^{-j}k)|^p\,dy}{(1+|k|)^{N}}
\right]^{q/p}\right\}^{1/q}\\
&&\hs\ls\sup_{P\in\mathcal{Q}}\frac1{|P|^\tau}
\left\{\sum_{j=\max\{j_P,0\}}^\fz 2^{jsq}
\left[\sum_{k\in\zz^n}\frac{\int_{B(y_P, d\ell(P)+|k|\ell(P))}|\wz\vz_j\ast f(z)|^p\,dz}{(1+|k|)^{N}}
\right]^{q/p}\right\}^{1/q}.
\end{eqnarray*}
Temporarily we assume  $q\le p$. By the definition of $\|\, f\, \|_{\bt}$, we conclude
that
\begin{eqnarray*}
&&\|\{t_{j,\ell}\}_{j,\ell\in\nn_0}\|_{b(p,q,\tau,\Omega)}\\
&&\hs\ls\sup_{P\in\mathcal{Q}}\frac1{|P|^\tau}
\left\{\sum_{k\in\zz^n}(1+|k|)^{-Nq/p}\r.\\
&&\lf.\hs\hs\times\sum_{j=\max\{j_P,0\}}^\fz 2^{jsq}
\left[\int_{B(y_P, d\ell(P)+|k|\ell(P))}|\wz\vz_j\ast f(z)|^p\,dz
\right]^{q/p}\right\}^{1/q}\\
&&\hs \ls\|f\|_{\bt}
\left\{\sum_{k\in\zz^n}(1+|k|)^{-Nq/p} (d+|k|)^{n\tau q}\right\}^{1/q}\ls \|f\|_{\bt},
\end{eqnarray*}
where we have chosen $N>n(\tau+1/q)p$. Now we turn to the case $q >p$.  With $N>n(\tau p+1)$ we obtain
\begin{eqnarray*}
&&\|\{t_{j,\ell}\}_{j,\ell\in\nn_0}\|_{b(p,q,\tau,\Omega)}\\
&&\hs\ls
\left\{\sum_{k\in\zz^n}(1+|k|)^{-N}\r.\\
&&\lf.\hs\hs\times
\sup_{P\in\mathcal{Q}}\frac1{|P|^{\tau p}}\left[\sum_{j=\max\{j_P,0\}}^\fz 2^{jsq}
\left[\int_{B(y_P, d\ell(P)+|k|\ell(P))}|\wz\vz_j\ast f(z)|^p\,dz
\right]^{q/p}\r]^{p/q}\right\}^{1/p}\\
&&\hs \ls\|f\|_{\bt}
\left\{\sum_{k\in\zz^n}(1+|k|)^{-N} (d+|k|)^{n\tau p}\right\}^{1/q}\ls \|f\|_{\bt} \, .
\end{eqnarray*}
Hence, we have proved that any $f \in \bt$ admits an atomic decompositions with respect to the sequence of coverings
$(\{\Omega_{j,\ell}\}_\ell)_j$ such that (\ref{s1}) holds with some constants behind $\ls$ independent of $f$.

{\em Substep 2.2.} Now we study the regularity of appropriate atomic decompositions with
coefficient sequence belonging to $b(p,q,\tau,\Omega)$.
Associated to the sequence of coverings $(\{\Omega_{j,\ell}\}_\ell)_j$
there is a regular sequence of coverings by balls $(\{B_{j,\ell}\}_\ell)_j$
such that
\[
\Omega_{j,\ell} \subset B_{j,\ell} \, , \qquad j \in \nn_0, \quad \ell \in \nn_0\, ;
\]
see \cite[Proposition~1]{ss00}.
Any atomic decomposition with respect to
$(\{\Omega_{j,\ell}\}_\ell)_j$ represents trivially an
atomic decomposition with respect to
$(\{B_{j,\ell}\}_\ell)_j$.
But for such a standard covering the desired property is known to be true, we refer to \cite[Theorem~3.3]{ysy}.
Let us mention that this has been  the only place, where we use $\tau \le 1/p$ in Step 2.
This finishes the proof of Theorem \ref{t2.1}.

\bigskip

Wen Yuan

\medskip

School of Mathematical Sciences, Beijing Normal University,
Laboratory of Mathematics and Complex Systems, Ministry of
Education, Beijing 100875, People's Republic of China

\medskip

and

\medskip

Mathematisches Institut, Friedrich-Schiller-Universit\"at Jena,
Ernst-Abbe-Platz 2, D-07743 Jena, Germany

\smallskip

{\it E-mail}: \texttt{wenyuan@bnu.edu.cn}

\bigskip

Winfried Sickel

\medskip

Mathematisches Institut, Friedrich-Schiller-Universit\"at Jena,
Ernst-Abbe-Platz 2, D-07743 Jena, Germany

\smallskip

{\it E-mail}: \texttt{winfried.sickel@uni-jena.de}

\bigskip

Dachun Yang

\medskip

School of Mathematical Sciences, Beijing Normal University,
Laboratory of Mathematics and Complex Systems, Ministry of
Education, Beijing 100875, People's Republic of China

\smallskip

{\it E-mail}: \texttt{dcyang@bnu.edu.cn}

\end{document}